\newtheorem{theorem}{Theorem}
\newtheorem{proposition}[theorem]{Proposition}
\newtheorem{lemma}[theorem]{Lemma}
\newtheorem{lemma-definition}[theorem]{Lemma-Definition}
\newtheorem{claim}[theorem]{Claim}
\newtheorem{fact}[theorem]{Fact}
\theoremstyle{definition}
\newcommand{\eqdef}{\;{:=}\;}
\newcommand{\C}{{\mathbb C}}
\newcommand{\R}{{\mathbb R}}
\newcommand{\Z}{{\mathbb Z}}
\newcommand{\op}{\operatorname}
\newcommand{\bpm}{\begin{pmatrix}}
\newcommand{\epm}{\end{pmatrix}}
\renewcommand{\epsilon}{\varepsilon}
\begin{document}

\setcounter{tocdepth}{2}

\title{Symplectic embeddings of products}

\author{D. Cristofaro-Gardiner \and R. Hind}

\date{\today}

\maketitle

\begin{abstract}

McDuff and Schlenk determined when a four-dimensional ellipsoid can be symplectically embedded into a four-dimensional ball, and found that when the ellipsoid is close to round, the answer is given by an “infinite staircase” determined by the odd-index Fibonacci numbers.  We show that this result still holds in higher dimensions when we ``stabilize" the embedding problem.

\end{abstract}

\begin{section}{Introduction.}

Recent years have seen much progress on the symplectic embedding problem, particularly in dimension $4$. A highlight was McDuff and Schlenk's classification of embeddings of $4$-dimensional ellipsoids into balls \cite{ms}. To state their result let us first introduce some notation that we use throughout this paper.

Consider Euclidean space $\R^{2N}$, with coordinates $x_j,y_j$, $1 \le j \le N$, equipped with its standard symplectic form $\omega = \sum_{j=1}^N dx_j \wedge dy_j$. Often it is convenient to identify $\R^{2N}$ with $\C^N$ by setting $z_j = x_j + i y_j$.   Now define the {\em symplectic ellipsoid}:
\[E(a_1, \dots ,a_N) = \left\{\sum_j \frac{\pi |z_j|^2}{a_j} \le 1\right\}.\]
These are subsets of $\C^N$ and so inherit the symplectic structure. A {\em ball} of capacity $R$ is simply an ellipsoid $B^{2N}(R) = E(R, \dots ,R)$; it is also convenient to write $\lambda E(a_1, \dots ,a_N)$ for $E(\lambda a_1, \dots ,\lambda a_N)$.

Symplectic ellipsoids provide a very fruitful source of examples for studying symplectic embedding problems.  Indeed, it is currently very much unknown when precisely one $2n$-dimensional symplectic ellipsoid embeds into another.  In \cite{ms}, McDuff and Schlenk completely determined the function $$c_B(x)=\inf \{R \hspace{2 mm} | \hspace{2 mm} E(1,x) \hookrightarrow B^4(R)\},$$ where $E(1,x) \hookrightarrow B^4(R)$ denotes a symplectic embedding. By reordering factors and appealing to scaling, the function $c_B(x)$ for $x \ge 1$ completely determines when a four-dimensional ellipsoid can be symplectically embedded into a four-dimensional ball.  It turns out that $c_B(x)$ is especially interesting when $1 \le x \le \tau^4$, where $\tau = (1+\sqrt{5})/2$ is the golden ratio. Here the function is an infinite staircase defined by ratios of odd Fibonacci numbers, as we will describe below.

In the current paper, we fix a dimension $2N \ge 6$ and consider the ``stabilized" version of $c_B$ given by
$$f(x) = \inf \{R| E(1,x) \times \R^{2(N-2)} \hookrightarrow B^4(R) \times \R^{2(N-2)}\}.$$  Our motivation is to understand to what extent $4$-dimensional features persist for higher dimensional embedding problems.  As $4$-dimensional embeddings $E(1,x) \hookrightarrow B^4(R)$ induce (by taking a product with the identity) high dimensional embeddings $E(1,x) \times \R^{2(N-2)} \hookrightarrow B^4(R) \times \R^{2(N-2)}$, we see immediately that
\begin{equation}
\label{eqn:upperbound}
f(x) \le c_B(x).
\end{equation}
One might guess that in fact $f(x)=c_B(x)$. This however is not necessarily the case -- there are no volume obstructions to embeddings into a product $B^4(R) \times \R^{2(N-2)},$ and the methods applied by McDuff and Schlenk are explicitly $4$-dimensional, relying on Seiberg-Witten theory and special properties of holomorphic curves in dimension $4$.  In fact, the following ``folding" construction in \cite{hind} shows that there is significant flexibility for the stabilized problem:

 \begin{theorem} (Hind, \cite{hind}) For any $S,x \ge 1$ and $\epsilon>0$ there exists a symplectic folding mapping $E(1,x,S \dots, S) \hookrightarrow B^4(\frac{3x}{x+1}+\epsilon) \times \R^{2(N-2)}$.
\end{theorem}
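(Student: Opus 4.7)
Since $E(1,x,S,\dots,S) \subset E(1,x) \times B^{2(N-2)}(S) \subset E(1,x) \times D^2(S) \times \R^{2(N-3)}$, it suffices to construct a symplectic embedding of $E(1,x) \times D^2(S)$ into $B^4(R) \times \C$ with $R = \frac{3x}{x+1} + \epsilon$; the remaining $\R^{2(N-3)}$ factor then goes along by the identity. Observe that the target radius is independent of $S$: this is the key flexibility gained by stabilizing.

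The construction is a symplectic fold in the extra $\C$-direction. View $E(1,x)$ through its moment map image, the triangle $\Delta = \{(u_1,u_2) : u_1, u_2 \ge 0,\ u_1 + u_2/x \le 1\}$ with $u_j = \pi|z_j|^2$. Choose a cut level $\alpha \in (0,1)$ and decompose $E(1,x)$ into an inner piece $E_- = E(1,x) \cap \{u_1 \le \alpha\}$ and an outer piece $E_+ = E(1,x) \cap \{u_1 \ge \alpha\}$; the outer piece is symplectomorphic, via a standard toric rescaling of the $z_1$-factor, to a copy of $E(1-\alpha, x(1-\alpha))$. Using the Hamiltonian circle action in the $z_3$-coordinate, lift $E_+$ to a positive height in the extra $\C$-direction while reflecting its moment image across $u_1 = \alpha$. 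Simultaneously apply to $E_-$ a Traynor-style toric embedding that redistributes some of its $u_2$-mass into the $z_3$-direction at a different height, so that both pieces now fit inside $\{u_1+u_2 \le R\} \times \C$.

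The two pieces glue smoothly along $\{u_1 = \alpha\}$ after the insertion of a collar of width $O(\epsilon)$ in the $z_3$-direction, which is the source of the additive $\epsilon$ slack. Verifying that the image lies inside $B^4(R) \times \C$ reduces to two competing affine inequalities on $\alpha$, one coming from each piece; balancing them yields a single-variable optimization whose minimum over $\alpha \in (0,1)$ evaluates to $R = \frac{3x}{x+1}$, giving the claimed bound.

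The main obstacle is passing from the moment-polytope combinatorics to genuine symplectomorphisms. The toric rescalings on $E_\pm$ must extend to symplectomorphisms of open neighborhoods, and they must be joined along $\{u_1 = \alpha\}$ by a Hamiltonian isotopy supported in the extra $D^2(S)$ factor. This is accomplished by the standard Lalonde--McDuff folding technique, realized via Hamiltonian flows generated in the $z_3$-variable, and it is precisely this gluing step that demands the additive $\epsilon$ of room in the target radius.
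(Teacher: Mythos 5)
This theorem is quoted in the paper from \cite{hind} and is not proved there, so there is no in-text argument to compare against; the relevant construction is Hind's symplectic folding from that reference. Your sketch invokes the right family of ideas (folding in an auxiliary $\C$ factor, working at the level of moment polytopes), but it has a genuine gap precisely where the quantitative content lies.

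First, the decomposition you describe does not by itself bring the target radius below $x$. Cutting $E(1,x)$ at $u_1 = \alpha$ leaves an inner piece $E_-$ whose moment image is the trapezoid with vertices $(0,0)$, $(\alpha,0)$, $(\alpha, x(1-\alpha))$, $(0,x)$. This piece contains the point $(0,x)$ with $u_1 + u_2 = x$, so $E_-$ alone does not fit in $B^4(R)$ for any $R < x$. All of the nontrivial work is therefore being done by the unexplained ``Traynor-style toric embedding that redistributes some of its $u_2$-mass into the $z_3$-direction,'' and you give no description of what that map actually is, why it is a symplectomorphism, or what region of $B^4(R)\times\C$ its image occupies. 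Without that, the construction is not specified. Second, the step ``reduces to two competing affine inequalities on $\alpha$; balancing them yields $R = \frac{3x}{x+1}$'' is asserted but never carried out; the reader has no way to verify the constant. Third, there is a structural reason to doubt that a single fold can produce this constant: a single reflection fold of $E(1,x)$ across $u_2 = c$ with $c\geq x/2$ gives $R = 1 - c/x + c$, minimized at $c = x/2$ to $R = \tfrac{1+x}{2}$, which diverges linearly as $x\to\infty$, whereas $\tfrac{3x}{x+1}\to 3$. Hind's construction in \cite{hind} is an iterated, multi-stage fold, and the bound $\tfrac{3x}{x+1}$ emerges from the optimization over that multi-fold. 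To make your argument correct you would need to exhibit the actual multi-fold (a sequence of cuts and explicit Hamiltonian bridges in the $z_3$-direction), verify each piece lands inside $B^4(R)\times\C$, and then perform the optimization that yields the stated constant.
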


Work of Pelayo and Ng\d{o}c, see \cite{pelngo1}, Theorem $4.1$, implies that these embeddings can be extended to $E(1,x) \times \R^{2(N-2)}$ and so we have that $f(x) \le \frac{3x}{x+1}$.  On the other hand, the four-dimensional volume obstruction implies that $c_B(x) \ge \sqrt{x}$.  It follows that we must have $f(x) < c_B(x)$ when $x > \tau^4$. Nevertheless, our main theorem states that the infinite staircase does in fact persist:

\begin{theorem}\label{main1} If $1 \le x \le \tau^4$ then $f(x)=c_B(x)$.
\end{theorem}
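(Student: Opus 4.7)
The inequality $f(x)\le c_B(x)$ is already (\ref{eqn:upperbound}), so the task is to prove the reverse bound on $[1,\tau^4]$. Since both $f$ and $c_B$ are monotone and continuous in $x$, and since on $[1,\tau^4]$ the staircase $c_B$ is piecewise linear with countably many ``outer corners'' at values $x_n$ built from odd-index Fibonacci numbers, it is enough to show $f(x_n)\ge c_B(x_n)$ at the corners; continuity and monotonicity then interpolate. The approach I would pursue is to transplant the 4-dimensional McDuff--Schlenk obstruction into dimension $2N$ via a neck-stretching / symplectic field theory argument.

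Fix a corner value $x=x_n$ and suppose for contradiction that $E(1,x)\times\R^{2(N-2)}\hookrightarrow B^4(R)\times\R^{2(N-2)}$ for some $R<c_B(x)$. Since compact subdomains of the source have bounded images, I would enlarge the source by capping the $\R^{2(N-2)}$ factor with a large ellipsoid $E(S',\ldots,S')$ and enclose the image in a correspondingly large target $B^4(R)\times E(S,\ldots,S)$, reducing the situation to a symplectic embedding of $2N$-dimensional ellipsoids. The 4-dimensional obstruction at $x_n$ is realized by an embedded genus-zero pseudoholomorphic sphere in the completion of $B^4(c_B(x))\setminus E(1,x)$, with Fibonacci-prescribed asymptotics at Reeb orbits on $\partial E(1,x)$ and $\partial B^4(c_B(x))$; its symplectic area forces $R\ge c_B(x)$. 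I would construct the analogous obstructing curve in dimension $2N$ by choosing a product almost complex structure on the cobordism between the two stabilized boundaries (so that projection to the first $\C^2$ factor is pseudoholomorphic), stretching the neck along $\partial E(1,x)\times(\text{ellipsoid})$, and applying SFT compactness to produce a holomorphic building. The goal is to extract from this building a component whose asymptotic Reeb orbits project to those of the 4-dimensional obstruction, so that its action gives the same inequality $R\ge c_B(x)$.

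The main obstacle lies in controlling the SFT limit in the presence of the stabilizing factor. A priori the limit building could have components escaping into $\R^{2(N-2)}$, or could degenerate into multiply covered configurations or branched covers that are invisible in dimension four; ruling these out is the technical heart of the proof. The main tools are the product almost complex structure, which forces the projection to $\C^2$ to be pseudoholomorphic and relates higher-dimensional areas to 4-dimensional ones; the explicit Reeb dynamics on $\partial E(1,x)\times\R^{2(N-2)}$, whose short periodic orbits are concentrated on the 4-dimensional core $\partial E(1,x)\times\{0\}$; and Fredholm index computations together with action bookkeeping to pin down the topological type of the obstructing component. Once such a component is isolated, a Stokes-theorem area estimate yields the desired contradiction, and the theorem follows by continuity. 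Executing this scheme uniformly in $n$ across the entire Fibonacci staircase, and establishing the requisite moduli-space transversality in dimension $2N$, is where I expect the real work to lie.
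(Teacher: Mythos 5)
Your high-level strategy -- reduce to corner values, realize the McDuff--Schlenk obstruction by a punctured holomorphic curve in the four-dimensional cobordism, and push that curve into dimension $2N$ so that an area estimate forces the bound -- is the right one and matches the paper in outline. But the proposal leaves unaddressed the two steps that constitute essentially all of the work, and the mechanism you sketch for the second of these would not run as stated.

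First, a small point on the reduction: continuity and monotonicity of $f$ alone do \emph{not} let you interpolate across the linearly increasing intervals $[a_n,b_n]$ once you know equality at the corners (a monotone continuous function can dip strictly below a line joining two of its values). What is needed, and what the paper uses in Lemma~\ref{lem:mcslemma}, is the scaling property $f(\lambda x)\le \lambda f(x)$ for $\lambda\ge 1$, which comes from rescaling embeddings; combined with monotonicity and the upper bound \eqref{eqn:upperbound}, this does pin $f$ to $c_B$ once $f(b_n)=c_B(b_n)$ is known.

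Second, you assert that ``the 4-dimensional obstruction at $x_n$ is realized by an embedded genus-zero pseudoholomorphic sphere'' with Fibonacci asymptotics, but this is not something McDuff--Schlenk prove and it is not obvious. Producing this curve is the content of Proposition~\ref{prop:mainprop} and occupies all of Section~\ref{secn2}: one uses the ECH cobordism map $\Phi$ and its holomorphic-curve axiom to get a broken $I=0$ current from $\alpha_2^{g_{n+1}}$ to some orbit set, an ECH-grading computation (the lattice-point count) to identify that orbit set as $\beta_1^{g_{n+2}}$, an action estimate (Lemma~\ref{actions}) to rule out breaking and multiple covers, and finally a $J_0$-index bound to force genus zero, one negative end, and embeddedness. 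None of this is in your sketch, and it is a genuinely new ingredient rather than a citation to \cite{ms}.

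Third, and most seriously, your mechanism for producing a curve in the $2N$-dimensional cobordism associated to the \emph{hypothetical} embedding is underspecified. You propose to choose a product-like $J$, ``stretch the neck along $\partial E(1,x)\times(\text{ellipsoid})$, and apply SFT compactness to produce a holomorphic building'' -- but SFT compactness produces a limit from a \emph{given} sequence of curves, and for an arbitrary embedding (which need not be a product) there is no a priori family of curves to stretch. You cannot conjure a curve in the cobordism of a hypothetical embedding out of compactness alone. The paper's route around this is different: for an honest product embedding and an $S^1$-invariant $J$, the four-dimensional moduli space $\mathcal{M}_0$ embeds into the $2N$-dimensional one, and automatic transversality in the sense of Wendl (after splitting the normal bundle into $H\oplus V$) shows the signed count is nontrivial (Proposition~\ref{nontriv}); then a separate compactness theorem for the universal moduli space over a generic $1$-parameter family of $J$'s (Theorem~\ref{compact}) shows this count is a cobordism invariant, so the moduli space for the hypothetical embedding is nonempty and the area argument closes. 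The index arithmetic and Fibonacci coprimality identities in Section~\ref{secn4} -- which rule out escape into the stabilizing factor, breaking, and multiple covers -- are exactly the content you defer to ``the real work,'' and they are what makes the cobordism-invariance step go through. As written, your proposal identifies the right difficulties but does not supply arguments for them, and the neck-stretching framing obscures the need for a nontrivial invariant count that persists in families.
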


Thus, Theorem~\ref{main1} in combination with the discussion above implies that the optimal embedding for the stabilized problem is given by a product precisely up until $\tau^4$, after which strictly better constructions are available. It is an interesting open question to determine $f(x)$ for $x > \tau^4$.

\newpage

{\bf Sketch of methods.}

\vspace{3 mm}
To describe our approach to Theorem \ref{main1} we recall the structure of McDuff and Schlenk's infinite staircase.

Let $g_0=1$ and $g_n$ for $n\ge 1$ be the $n$th odd Fibonacci number. Thus $\{g_n\}_{n=0}^{\infty}$ is the sequence beginning $1, 1, 2, 5, 13, 34, \dots$. Then we can define sequences $\{a_n\}_{n=0}^{\infty}$ and $\{b_n\}_{n=0}^{\infty}$ by $a_n = \left( \frac{g_{n+1}}{g_n} \right)^2$ and $b_n=\frac{g_{n+2}}{g_n}$. We have $\lim_{n \to \infty} b_n = \tau^4 = \frac{7+3\sqrt{5}}{2}$. Given this, Theorem $1.1.2$ in \cite{ms} says the following.

\begin{theorem} \label{cfn} (McDuff-Schlenk, \cite{ms}, Theorem $1.1.2$) On the interval $1 \le x \le \tau^4$ the function $c_B(x)$ is linear on the intervals $[a_n, b_n]$ and constant on the intervals $[b_n, a_{n+1}]$. We have $c_B(a_n)=\frac{g_{n+1}}{g_n}$ and $c_B(b_n) = c_B(a_{n+1}) = \frac{g_{n+2}}{g_{n+1}}$.
\end{theorem}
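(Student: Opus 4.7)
The plan is to adapt McDuff and Schlenk's approach: reduce the ellipsoid embedding problem to a ball packing problem via McDuff's ellipsoid--ball theorem, translate ball packings into positivity of a cohomology class on an iterated blow-up of $\mathbb{CP}^2$, and extract the resulting obstructions as a supremum over exceptional classes. Concretely, $E(1,x) \hookrightarrow \operatorname{int}(B^4(R))$ is equivalent to the existence of a symplectic embedding $\coprod_i B^4(w_i(x)) \hookrightarrow B^4(R)$, where $w(x) = (w_1, w_2, \ldots)$ is the finite weight expansion of $x$ determined by its continued fraction. Since $\sum_i w_i(x)^2 = x$, this already supplies the volume obstruction $c_B(x) \ge \sqrt{x}$, which pins down $c_B(a_n) \ge \sqrt{a_n} = g_{n+1}/g_n$.

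Next, by Li--Liu's extension of Taubes--Seiberg--Witten, together with the Biran / McDuff--Polterovich existence criterion for ball packings, an embedding $B^4(R) \sqcup \coprod_i B^4(w_i) \hookrightarrow \mathbb{CP}^2(R)$ exists if and only if the corresponding cohomology class on the iterated blow-up pairs non-negatively with every symplectic exceptional class $E = dL - m_0 E_0 - \sum_i m_i E_i$ (characterised by $E\cdot E = K\cdot E = -1$). Hence
\[
c_B(x) \;=\; \max\!\Bigl(\sqrt{x},\ \sup_{E}\frac{\sum_i m_i\, w_i(x)}{m_0}\Bigr).
\]

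The staircase itself comes from a distinguished family of \emph{Fibonacci exceptional classes} $\mathcal{E}_n$ whose multiplicity vector $(m_0; m_1, \dots)$ is read off from the continued fraction expansion of $b_n = g_{n+2}/g_n$. Using the recursion $g_{n+2} = 3 g_{n+1} - g_n$ for odd Fibonacci numbers and the identity $g_{n+2} g_n - g_{n+1}^2 = 1$, one verifies that $\mathcal{E}_n$ is indeed exceptional and that the obstruction $x \mapsto \sum_i m_i w_i(x)/m_0$ is affine-linear on $[a_n, b_n]$, equal to $g_{n+1}/g_n$ at $x = a_n$ and to $g_{n+2}/g_{n+1}$ at $x = b_n$. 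Together with the volume bound and the evident monotonicity of $c_B$, this yields the lower bounds claimed in the theorem, including the constant value $g_{n+2}/g_{n+1}$ on $[b_n, a_{n+1}]$.

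Matching upper bounds come from explicit ball packings whose existence reflects the linearity in the weights built into the same Fibonacci recursion; at each $x = a_n$ the volume bound is sharp by McDuff--Polterovich, and the packings realising the $\mathcal{E}_n$-obstruction elsewhere on $[a_n, b_n]$ can be produced by interpolation. The main technical obstacle --- and the bulk of the McDuff--Schlenk argument --- is justifying the supremum in the formula above: one must show that no exceptional class other than the $\mathcal{E}_n$ produces a stronger obstruction anywhere in $[1, \tau^4]$. This is carried out via a delicate Diophantine / continued-fraction analysis, reducing an arbitrary candidate class by Cremona transformations on the multiplicity vector and showing that after reduction it is either dominated by the volume bound or coincides with a rescaled $\mathcal{E}_k$. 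I would expect this Cremona--reduction step to be by far the main obstacle.
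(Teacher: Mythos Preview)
The paper does not give its own proof of this statement: Theorem~\ref{cfn} is simply quoted from McDuff--Schlenk \cite{ms} as background, with no argument supplied. So there is no ``paper's own proof'' to compare against.

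That said, your outline is a faithful sketch of the actual McDuff--Schlenk argument in \cite{ms}: the reduction to ball packings via the weight expansion, the translation into positivity against exceptional classes on blow-ups of $\mathbb{CP}^2$, the identification of the Fibonacci classes $\mathcal{E}_n$ giving the staircase obstructions, and the Cremona-reduction analysis to rule out competing obstructive classes. Your assessment that the Cremona step is the heart of the matter is accurate. One minor point: the upper bounds on $[a_n,b_n]$ are not really obtained by ``interpolation'' of packings; rather, once the obstruction formula $c_B(x)=\max(\sqrt{x},\sup_E \langle m,w(x)\rangle/m_0)$ is established and the supremum is shown to be realized by $\mathcal{E}_n$ on $[a_n,b_n]$, the upper bound is automatic from the same packing criterion (the class being exceptional guarantees embeddability right up to the obstruction). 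But this is a matter of phrasing, not a gap.
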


The following lemma follows directly from \cite{ms}, Lemma $1.1.1$.

\begin{lemma}\label{lem:mcslemma} If $f(b_n) = c_B(b_n)$ for all $n$, then $f(x) = c_B(x)$ for all $x \le \tau^4$.
\end{lemma}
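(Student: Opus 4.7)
The plan is to deduce $f \equiv c_B$ on $[1,\tau^4]$ from the hypothesis $f(b_n) = c_B(b_n)$ using only two elementary properties of $f$: monotonicity and a scaling inequality.

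First I would verify that $f$ is non-decreasing and that $f(x)/x$ is non-increasing for $x \ge 1$. Monotonicity is immediate from the inclusion $E(1,x) \subseteq E(1,x')$ whenever $x \le x'$. For the scaling statement, if $E(1,x) \times \R^{2(N-2)}$ symplectically embeds into $B^4(R) \times \R^{2(N-2)}$, then rescaling the symplectic form by any $\lambda \ge 1$ produces an embedding of $E(\lambda,\lambda x) \times \R^{2(N-2)}$ into $B^4(\lambda R) \times \R^{2(N-2)}$ (the $\R^{2(N-2)}$ factor is unchanged up to symplectomorphism), and since $E(1,\lambda x) \subseteq E(\lambda,\lambda x)$ we get $f(\lambda x) \le \lambda R$; passing to the infimum over $R$ yields $f(\lambda x) \le \lambda f(x)$, which is equivalent to $f(x)/x$ being non-increasing on $[1,\infty)$.

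Next I would read off the shape of $c_B$ on each subinterval from Theorem \ref{cfn}. On each plateau $[b_n,a_{n+1}]$, $c_B$ is constant, taking the value $g_{n+2}/g_{n+1}$. On each linear step $[a_n,b_n]$, direct computation gives $c_B(a_n)/a_n = (g_{n+1}/g_n)/(g_{n+1}/g_n)^2 = g_n/g_{n+1}$ and $c_B(b_n)/b_n = (g_{n+2}/g_{n+1})/(g_{n+2}/g_n) = g_n/g_{n+1}$, so the linear function passes through the origin and $c_B(x) = (g_n/g_{n+1})\,x$ throughout this interval.

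The conclusion then comes interval by interval. On a plateau $[b_n,a_{n+1}]$, the hypothesis gives $f(b_n) = c_B(b_n)$, monotonicity of $f$ gives $f(x) \ge f(b_n)$, and $f \le c_B$ from \eqref{eqn:upperbound} gives $f(x) \le c_B(x) = c_B(b_n)$; these pin $f(x) = c_B(x)$. On a linear step $[a_n,b_n]$, the scaling property yields $f(x)/x \ge f(b_n)/b_n = g_n/g_{n+1} = c_B(x)/x$, so $f(x) \ge c_B(x)$, and together with $f \le c_B$ this forces $f = c_B$. Since the intervals $[a_n,b_n]$ and $[b_n,a_{n+1}]$ exhaust $[1,\tau^4)$ and the value at $\tau^4$ is controlled by monotonicity, this proves the lemma. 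There is no real obstacle here: the argument is essentially bookkeeping once the scaling inequality $f(\lambda x) \le \lambda f(x)$ is in hand, and that inequality is itself routine.
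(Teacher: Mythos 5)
Your proof is correct and follows essentially the same route as the paper's: monotonicity handles the plateaus $[b_n,a_{n+1}]$, and the sublinearity $f(\lambda x)\le\lambda f(x)$ (paper's equation \eqref{eqn:sublinearity}, cited there from \cite{ms} Lemma 1.1.1) combined with the fact that $c_B$ is linear through the origin on $[a_n,b_n]$ handles the steps. The only cosmetic difference is that you pin down the linear intervals from the right endpoint $b_n$ alone, whereas the paper phrases it via agreement at both endpoints; the underlying mechanism is identical.
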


\begin{proof} Since $f$ is nondecreasing, the hypothesis together with \eqref{eqn:upperbound} imply that $f(x)=c_B(x)$ on the intervals $[b_n, a_{n+1}]$.

Next, as in \cite{ms} Lemma $1.1.1$, we observe that
\begin{equation}
\label{eqn:sublinearity}
f(\lambda x) \le \lambda f(x)
\end{equation}
for $\lambda \ge 1$. Let the interval $I=[a_n, b_n]$ for some $n$. As the graph of $c_B |_I$ lies on a line through the origin, and since from the above $f(x)$ coincides with $c_B(x)$ at the endpoints of $I$, the observation \eqref{eqn:sublinearity} implies that in fact $f=c_B$ on the whole interval and this completes the proof.
\end{proof}

The proof of Theorem \ref{main1} thus reduces to showing that $f(b_n)= \frac{g_{n+2}}{g_{n+1}}$ for all $n \ge 1$ and we will prove this by studying holomorphic curves in symplectic cobordisms.

\vspace{3 mm}

{\bf Outline of the paper.}

\vspace{3 mm}

Our argument combines ideas from \cite{hk} with some techniques involving embedded contact homology.  The details are as follows.

In section \ref{secn2} we consider an embedding $\phi:E(1,b_n+\epsilon) \hookrightarrow \op{int}(E(c,c+\epsilon))$ where $c$ is slightly larger than $c_B(b_n)$ and $\epsilon > 0$ is small. We look at holomorphic curves in the completion of the cobordism $E(c,c+\epsilon) \setminus \phi(\op{int}(E(1,b_n+\epsilon)))$ and establish the nontriviality of a certain moduli space of curves with $g_{n+1}$ positive ends and a single negative end. This relies heavily on the machinery of embedded contact homology.

In section \ref{secn3} we consider a product embedding $\tilde{\phi}:E(1,b_n+\epsilon,S, \dots ,S) \hookrightarrow E(c,c+\epsilon)\times \R^{2(N-2)}$ and a corresponding $2N$ dimensional cobordism. For a suitable choice of almost-complex structure the curves constructed in dimension $4$ imply that a corresponding moduli space of curves in the high dimensional cobordism is also nontrivial. We proceed to show that in fact it is also nontrivial as a cobordism class.

In section \ref{secn4} we prove a compactness theorem showing that the cobordism class of the moduli space studied in section \ref{secn3} is the same for all embeddings $\tilde{\psi}:\lambda E(1,b_n+\epsilon,S, \dots ,S) \hookrightarrow E(c,c+\epsilon)\times \R^{2(N-2)}$ with $\lambda>0$, and in particular is always nontrivial. As holomorphic curves have positive area this readily implies Theorem \ref{main1} as we make precise in section \ref{secn5}.

\vspace{3 mm}

{\bf Acknowledgements}

\vspace{3 mm}

This paper arose from conversations between the two authors at the ``Transversality in contact homology" conference organized by the American Institute of Mathematics (AIM).  We thank AIM for their hospitality.  The first author is partially supported by NSF grant DMS-1402200. The second author is partially supported by grant \# 317510 from the Simons Foundation.

\end{section}

\begin{section}{Four dimensional cobordisms.}\label{secn2}

\subsection{The main proposition}
\label{sec:prpn}

Fix a sufficiently small irrational $\epsilon > 0$, and consider the four-dimensional symplectic ellipsoids
\[ E_1 \eqdef \lambda E\left(\frac{g_{n+2}}{g_{n+1}},\frac{g_{n+2}}{g_{n+1}}+\epsilon\right), \quad E_2 \eqdef E\left(1,\frac{g_{n+2}}{g_n}+\epsilon\right), \quad .\]
where $\lambda > 1$ is some real number close to $1$.  As with any irrational ellipsoid $E(a,b)$, these have a natural contact form with exactly two Reeb orbits, one of action $a$ and the other of action $b$.  Here, the {\em action} of a Reeb orbit $\gamma$ is defined by
\[ \mathcal{A}(\gamma) = \int_{\gamma} \mu\]
where $\mu$ is a Liouville form on $\C^2$.

Let $\alpha_1$ denote the short Reeb orbit on $\partial E_1$ and let $\alpha_2$ denote the long orbit; define $\beta_1$ and $\beta_2$ on $\partial E_2$ analogously.
By an {\em orbit set} we mean a finite collection of distinct embedded Reeb orbits with multiplicities, which we write with multiplicative notation. For example $\alpha = \alpha_1^k \alpha_2^l$ is an orbit set in $\partial E_1$ for any $k,l \ge 0$.
It is useful to define the action of an orbit set by
\[\mathcal{A}\left(\sum_i \alpha_i^{m_i}\right) = \sum_i m_i \mathcal{A}(\alpha_i).\]

Recall that by \cite{ms}, see Theorem \ref{cfn} above, there is a symplectic embedding $\Psi: E_2 \to \op{int}(E_1)$ for any $\lambda>1$.  Choose such a $\lambda$ close to $1$, let $X$ denote the symplectic cobordism $E_1 \setminus \Psi(E_2)$, and let $\overline{X}$ denote the symplectic completion of $X$ (see for instance \cite{echlecture}, section $5.5$).  Let $J$ denote a ``cobordism admissible" (in the sense of \cite{echlecture} again for example) almost complex structure on $\overline{X},$ and for orbit sets $\alpha$ and $\beta$, let $\mathcal{M}(\alpha,\beta)$ denote the moduli space of $J$-holomorphic curves in $\overline{X}$ asymptotic to an orbit set $\alpha$ at $+\infty$ and $\beta$ at $-\infty$.  Saying that a holomorphic curve is {\em asymptotic} to $\alpha = \alpha_1^k \alpha_2^l$ means that its positive ends cover $\alpha_1$ with total multiplicity $k$ and $\alpha_2$ with total multiplicity $l$, see eg \cite{echlecture} for more details.

The goal of this section is to prove the following.

\begin{proposition}
\label{prop:mainprop}

For any $n \ge 0$, if $\epsilon$ is sufficiently small and $\lambda$ is sufficiently close to $1$, then there is a connected embedded $J$-holomorphic curve $C \in \mathcal{M}(\alpha_2^{g_{n+1}},\beta_1^{g_{n+2}})$.  The curve $C$ has genus $0$, $g_{n+1}$ positive ends, and one negative end. In other words, each positive end is asymptotic to $\alpha_2$ and the negative end is asymptotic to the degree $g_{n+2}$ cover of $\beta_1$.
\end{proposition}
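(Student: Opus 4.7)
The proof goes through embedded contact homology (ECH). As a first step, I would compute the ECH indices of the orbit sets $\alpha_2^{g_{n+1}}\in\partial E_1$ and $\beta_1^{g_{n+2}}\in\partial E_2$. Recall that for an ellipsoid $E(a,b)$ with $a/b$ irrational, $ECH(\partial E(a,b))$ is isomorphic to $\mathbb{Z}$ in each even non-negative grading, with generators represented by orbit sets $e_1^ke_2^l$ and with ECH grading equal to twice the number of lattice points $(i,j)\in\mathbb{Z}_{\ge 0}^2$ whose action $ia+jb$ is strictly less than $ka+lb$. A direct lattice-point count, using that $(k+l)a$ dominates the perturbation $l\epsilon$ for $\epsilon$ small, together with the Fibonacci recursion $g_{n+2}=3g_{n+1}-g_n$, shows that both gradings equal $d:=g_{n+1}(g_{n+1}+3)$. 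In particular, $[\alpha_2^{g_{n+1}}]$ and $[\beta_1^{g_{n+2}}]$ are the unique generators of $ECH_d(\partial E_1)$ and $ECH_d(\partial E_2)$, respectively.

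Next, I would invoke the cobordism machinery of Hutchings-Taubes. The Liouville cobordism $X=E_1\setminus\Psi(E_2)$ induces a map $\Phi_X:ECH(\partial E_1)\to ECH(\partial E_2)$ which intertwines the $U$-map and preserves the contact element. Since both groups are $\mathbb{Z}$ in each even non-negative grading and $\Phi_X$ is $U$-equivariant, it must be an isomorphism in each grading; hence $[\alpha_2^{g_{n+1}}]$ is sent to $\pm[\beta_1^{g_{n+2}}]$. The map $\Phi_X$ is computed, up to sign, by a signed count of ECH-index-zero holomorphic currents in $\overline{X}$ between these generators, so its non-vanishing produces at least one such current $\mathcal{C}=\sum_im_iC_i$ with positive asymptotic $\alpha_2^{g_{n+1}}$ and negative asymptotic $\beta_1^{g_{n+2}}$.

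It remains to extract from $\mathcal{C}$ a connected, embedded, genus-$0$ curve with the claimed end structure, and this is the main obstacle. A priori $\mathcal{C}$ could involve several somewhere-injective components, multiply-covered components, or branched covers of trivial cylinders. The plan is to rule these out by combining area and index bookkeeping. The total symplectic area of $\mathcal{C}$ is $\mathcal{A}(\alpha_2^{g_{n+1}})-\mathcal{A}(\beta_1^{g_{n+2}})=\lambda(g_{n+2}+g_{n+1}\epsilon)-g_{n+2}$, which is $O(\lambda-1)+O(\epsilon)$; since each component has non-negative area, this tight budget sharply restricts the possibilities. Combined with Hutchings' ECH index inequality $I(C_i)\ge\text{ind}(C_i)+2\delta(C_i)$, its refinement in the presence of the multiply-covered negative end at $\beta_1^{g_{n+2}}$, and the constraint that the total ECH index of $\mathcal{C}$ is zero, this will force $\mathcal{C}$ to consist of a single somewhere-injective embedded curve ($\delta=0$) with multiplicity one. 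Genus zero and the precise end structure then follow from matching the ECH index with the Fredholm index $\text{ind}(C)=-\chi(C)+2c_\tau(C)+\sum CZ^\tau(\gamma_\pm)$ after computing the Conley-Zehnder indices $CZ(\alpha_2)=3$ and $CZ(\beta_1^{g_{n+2}})=2g_n-1$ for the Reeb orbits of the ellipsoids.
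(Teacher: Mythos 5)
Your overall strategy matches the paper's: compute gradings, use the ECH cobordism map together with its holomorphic-curve axiom to produce a current of ECH index $0$ from $\alpha_2^{g_{n+1}}$ to $\beta_1^{g_{n+2}}$, then pin down its structure via area and index considerations. Your grading computation and Conley--Zehnder values are correct, and your argument via $U$-equivariance that the cobordism map is an isomorphism is a legitimate alternative to the paper's appeal to diffeomorphism-invariance of the Seiberg--Witten cobordism map.

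The genuine gap is in the paragraph you yourself flag as ``the main obstacle.'' The holomorphic curve axiom only yields a possibly \emph{broken} building with several symplectization and cobordism levels, and the cobordism-level components may a priori be multiply covered; for multiple covers in a cobordism the ECH index can be \emph{negative}, so pure index bookkeeping does not rule them out, and nonnegativity of each component's action plus the $O(\lambda-1)+O(\epsilon)$ budget alone is also insufficient. What closes this is the quantitative Lemma~\ref{actions}: any somewhere-injective cobordism curve asymptotic at $+\infty$ to an orbit set of grading \emph{strictly below} $\op{gr}(\alpha_2^{g_{n+1}})$ has action strictly greater than the full budget $(\lambda-1)g_{n+2}+\epsilon\lambda g_{n+1}$ once $\epsilon$ is small and $\lambda$ close to $1$. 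Proving this requires the strict inequality of ECH capacities $\mathcal N(1,g_{n+2}/g_n)_k<\mathcal N(g_{n+2}/g_{n+1},g_{n+2}/g_{n+1})_k$ for all $0\le k<(g_{n+1}^2+3g_{n+1})/2$, which in turn rests on the coprimality of $g_{n+1}$ and $g_ng_{n+2}$; none of this number-theoretic input appears in your proposal, and it is the actual mathematical content of your phrase ``sharply restricts the possibilities.'' A secondary gap is in the final step: for small $\epsilon$ the Fredholm index is insensitive to how the total positive multiplicity $g_{n+1}$ is partitioned among ends, so matching $I=0$ with $\mathrm{ind}$ only yields a relation of the form $g+n_+=g_{n+1}$; the paper must additionally invoke the ECH partition conditions to force $g_{n+1}$ simple positive ends, and then uses Hutchings' $J_0$-index inequality to conclude genus $0$, $\delta(C)=0$, and a single negative end.
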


To put this slightly differently, let us define $\mathcal{M}_0$ to be the moduli space of $J$-holomorphic curves in $\overline{X}$ with $g_{n+1}$ positive ends asymptotic to $\alpha_2$ and one negative end asymptotic to $\beta_1^{g_{n+2}}$. Proposition \ref{prop:mainprop} says that $\mathcal{M}_0$ is nonempty.

\subsection{ECH Preliminaries}
\label{sec:prelim}

Let $(Y,\mu)$ be a closed three-manifold with a nondegenerate contact form.  The {\em embedded contact homology} of $Y$ (with $\Z/2$ coefficients), $ECH_*(Y,\mu)$, is the homology of a chain complex $ECC_*(Y,\mu)$.  This chain complex is freely generated over $\Z/2$ by orbit sets, where the definition of orbit set was given in section \ref{sec:prpn}.  The orbit sets are required to be {\em admissible}.   This means that $m_i$ is equal to $1$ whenever $\alpha_i$ is hyperbolic.  The chain complex differential $d$ is defined by counting {\em ECH index} $1$ ``$J$-holomorphic currents" in $\mathbb{R} \times Y$, for admissible $J$.  Specifically, the coefficient $\langle d\alpha, \beta \rangle$ is a mod $2$ count of ECH index $1$ $J$-holomorphic currents, modulo translation in the $\R$-direction, that are asymptotic to $\alpha$ at $+\infty$ and asymptotic to $\beta$ at $-\infty$; for the definiton of asymptotic in this context, see the previous section.  By a {\em holomorphic current}, we mean a finite set $\lbrace (C_i,m_i) \rbrace$, where the $C_i$ are irreducible \footnote{We call a somewhere injective curve {\em irreducible} if its domain is connected.} somewhere injective $J$-holomorphic curves in $\R \times Y$ and the $m_i$ are positive integers.  Two $J$-holomorphic currents are declared equivalent if they are equivalent as currents.  We denote the space of $J$-holomorphic currents from $\alpha$ to $\beta$ by $\mathcal{M}_{\op{current}}(\alpha,\beta)$.  If $J$ is generic, then it is shown in \cite{obg1,obg2} that $d^2 = 0$.  The ECH index, which is the key nonstandard feature of the definition of ECH, will be defined in the next section.  For more about ECH, see \cite{echlecture}.

Now let $\Psi: (X_2,\omega_2) \to \op{int}(X_1,\omega_1)$ be a symplectic embedding of Liouville domains.  Consider the symplectic cobordism
\[ X = (X_1,\omega_1) \setminus \Psi(\op{int}(X_2,\omega_2)).\]
By [HT1], there is an induced map
\[ \Phi: ECH(\partial X_1) \to ECH(\partial X_2).\]
This map is defined by using Seiberg-Witten theory.  Nevertheless, it satisfies a {\em holomorphic curve} axiom.  Namely, it is shown in \cite{cc2} that $\Phi$ is induced from a chain map $\tilde{\Phi}$ with the following property: if $\alpha$ and $\beta$ are nonzero chain complex generators with $\langle \tilde{\Phi}(\alpha),\beta \rangle \ne 0$, then there is a possibly broken {\em $J$-holomorphic current} $C \in \overline{\mathcal{M}}_{\op{current}}(\alpha,\beta)$ with $I(C)=0$.  A {\em broken} $J$-{\em holomorphic current} from $\alpha$ to $\beta$ is a sequence of holomorphic currents $C_1,\ldots,C_n$ such that $C_i \in \mathcal{M}_{\op{current}}(\gamma_i,\gamma_{i+1})$, where the $\gamma_i$ are orbit sets such that $\gamma_1 = \alpha$ and $\gamma_{n+1} = \beta$.  The $C_i$ are called {\em levels}, and in principle could be curves in either $\mathbb{R} \times \partial X_1$ or $\mathbb{R} \times \partial X_2$, with an $\mathbb{R}$ invariant almost-complex structure, or in $\overline{X}$ with a cobordism admissible almost-complex structure. In fact, only one of the levels is a curve in $\overline{X}$; this is called the {\em cobordism} level, and the other levels are called {\em symplectization levels}.    The ECH index of a broken holomorphic current is the sum of the ECH indices of each level.

\subsection{The ECH index and the $J_0$ index}
\label{secn:index}

Let $C \in \mathcal{M}_{\op{current}}(\alpha,\beta)$ be a $J$-holomorphic current in $\overline{X}$.  The ECH index only depends on the relative homology class $[C]$.  Specifically, the formula for the ECH index is as follows:
\begin{equation}
\label{eqn:echindex}
I([C]) = c_{\tau}([C])+Q_{\tau}([C])+CZ_{\tau}^I([C]).
\end{equation}
Here, $\tau$ denotes a symplectic trivialization of $\xi \eqdef \op{Ker}(\mu)$ over each embedded Reeb orbit,  $c_{\tau}([C])$ denotes the relative first Chern class $c_1(T\overline{X}|_{[C]},\tau)$ (defined using an admissible almost-complex structure), $Q_{\tau}([C])$ denotes the ``relative intersection pairing", and $CZ_{\tau}^I([C])$ denotes the {\em total Conley-Zehnder} index
\[ CZ_{\tau}^I([C]) = \sum_i \sum_{l = 1}^{m_i} \op{CZ}_{\tau}(\alpha^l_i) - \sum_{j} \sum_{k=1}^{n_j}\op{CZ}_{\tau}(\beta^k_j),\]
where $\alpha = \sum_i \alpha_i^{m_i}$ and $\beta = \sum_j \beta_j^{n_j} $.  In this formula $\op{CZ}_{\tau}(\gamma^k)$ denotes the Conley-Zehnder index of the $k$-times multiple cover of an embedded Reeb orbit $\gamma$, defined relative to the trivialization $\tau$. We will not define the relative intersection pairing here, see \cite{echlecture} for the details, but in section \ref{secn:ell} we will give formulas for computing these quantities for ellipsoids.

There is a variant of $I$ which bounds the topological complexity of $C$, called the {\em $J_0$} index, which we will also use.  It is given by the formula
\begin{equation}
\label{eqn:joindex}
J_0([C]) \eqdef -c_{\tau}([C])+Q_{\tau}([C])+CZ_{\tau}^J([C]),
\end{equation}
where $CZ_{\tau}^J([C])=\sum_i \sum_{l = 1}^{m_i-1} \op{CZ}_{\tau}(\alpha^l_i) - \sum_{j} \sum_{k=1}^{n_j-1}\op{CZ}_{\tau}(\beta^k_j).$  Assume now that $C$ is somewhere injective, connected, has genus $g$, and all ends at elliptic orbits.  It is shown in \cite[Prop 6.9]{hutchings_absolute} that
\[J_0(C) \ge 2(g-1 + \delta(C)) + \sum_{\gamma} (2n_{\gamma}-1),\]
where the sum is over all embedded Reeb orbits $\gamma$ at which $C$ has ends, $n_{\gamma}$ denotes the total number of ends of $C$ at $\gamma$, and $\delta(C)$ denotes an algebraic count of the number of singularities of $C$; in particular, $\delta(C) \ge 0$, and equal to $0$ if and only if $C$ is embedded.

\subsection{The partition conditions}

Let $C \in \mathcal{M}(\alpha,\beta)$ be a connected somewhere injective curve in $\overline{X}$ with $I(C)=\op{ind}(C)=0$.  It is shown in \cite{hutchings_absolute} that we can compute the multiplicities of the ends of $C$ at $\alpha$ and $\beta$ purely combinatorially, given the monodromy angles of the underlying embedded orbits in $\alpha$ and $\beta$.  This works as follows for the positive ends, in the case where all orbits in $\alpha$ and $\beta$ are elliptic. (The formula for the negative ends is similar, but we will not need this.  The formula when there are hyperbolic orbits is also not hard.)

Suppose then that $C$ is a somewhere injective curve with positive ends at an elliptic orbit $\tilde{\gamma}$, with total multiplicity $m$.  This means that the positive ends of $C$ form an unordered partition $(m_1,\ldots,m_n)$ of $m$, called the {\em positive partition} of $m$.  Let $\gamma$ be the underlying embedded orbit for $\tilde{\gamma}$ (to clarify the notation, this means that $\tilde{\gamma}$ is an $m$-fold cover of $\gamma$).

Here is how we can compute the partition $(m_1,\ldots,m_n)$.  As $\gamma$ is elliptic our trivialization $\tau$ is homotopic to one where the linearized Reeb flow generates a rotation through an angle $2 \pi \theta$. Then $\theta$ is the monodromy angle for $\gamma$, and we let $L$ be the line in the $xy$-plane that goes through the origin and has slope $m \theta$.  Now let $\Lambda$ be the maximum concave piecewise linear continuous lattice path that starts at $(0,0),$ ends at $(m,\lfloor m \theta \rfloor)$, and stays below the line $L$; this means that the area under $\Lambda$ is the convex hull of the set of lattice points in the region bounded by the $x$-axis, the line $x = m$, and the line $L$.  It is shown in \cite{hutchings_absolute} that the entries $m_i$ are the horizontal displacements of the vectors in $\Lambda$.

\subsection{The ellipsoid case}
\label{secn:ell}

We now explain how to compute $I$ and $J_0$ in the case relevant to Proposition~\ref{prop:mainprop}.

Recall the notation from the beginning of this section, and let $C$ be a $J$-holomorphic current in $\overline{X}$ (to emphasize, $\overline{X}$ now denotes the completion of the cobordism induced by the embedding of the ellipsoids at the beginning of this section).  We can trivialize the contact structure over each embedded Reeb orbit on the boundary of either $E_1$ or $E_2$ by using the identification $T \R^4 = \C \oplus \C$ and observing that the contact structure on the boundary of either ellipsoid restricts to each Reeb orbit as one of these $\C$ factors.  Call this trivialization $\tau$.  Now assume that $C$ is asymptotic to the orbit set $\alpha_1^{m_1}\alpha_2^{m_2}$ at $+\infty$, and asymptotic to the orbit set $\beta_1^{n_1}\beta_2^{n_2}$ at $-\infty$.  We now have the following formulas for the quantities that enter into $I$ and $J_0$:
\[ c_{\tau}([C]) = (m_1 + m_2) - (n_1 + n_2), \quad \quad Q_{\tau}([C])=2(m_1m_2-n_1n_2).\]
We also know that the monodromy angle (with respect to $\tau$) of any of the four embedded Reeb orbits relevant to the asymptotics is equal to the length of this Reeb orbit, divided by the length of the other Reeb orbit (so, for example, the monodromy angle of $\alpha_1$ is slightly less than $1$); we can use this to compute $\op{CZ}^I_{\tau}$ or $\op{CZ}^J_{\tau}$.  These formulas are proved in \cite{echlecture}, see [Ex. 1.8 and Sec. 3.7].  In this section of \cite{echlecture}, Hutchings is considering the case of the symplectization of a single ellipsoid; however, since these quantities are purely topological the computations extend to our situation as well.

The following basic consideration will also be useful:

\begin{fact}
\label{fct:ellipsoidfact}
Let $b/a$ be irrational.  Then the chain complex differential $d$ for $ECH(\partial E(a,b))$ satisfies $d=0$.
\end{fact}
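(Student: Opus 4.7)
My plan is to show that the ECH index $I([C])$ of every $J$-holomorphic current $C$ in the symplectization $\R \times \partial E(a,b)$ is even. Since the differential $d$ counts currents with $I = 1$, this will immediately force $d = 0$.

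First I would enumerate the generators. Because $b/a$ is irrational, the two embedded Reeb orbits $\alpha_1, \alpha_2$ on $\partial E(a,b)$, of actions $a$ and $b$, are both elliptic; every orbit set $\alpha_1^m \alpha_2^n$ with $m, n \ge 0$ is therefore admissible. Given a current $C$ from $\alpha_1^{m_1}\alpha_2^{m_2}$ to $\alpha_1^{n_1}\alpha_2^{n_2}$, the topological formulas from Section~\ref{secn:ell} apply verbatim (they are combinatorial in the multiplicities, so they transfer from a cobordism to a symplectization without change): $c_\tau([C]) = (m_1 + m_2) - (n_1 + n_2)$ and $Q_\tau([C]) = 2(m_1 m_2 - n_1 n_2)$.

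Second I would compute parities modulo $2$. Each iterate of an elliptic orbit has Conley-Zehnder index $\op{CZ}_\tau(\gamma^k) = 2\lfloor k\theta\rfloor + 1$, which is odd. Hence the total Conley-Zehnder term $\op{CZ}^I_\tau([C])$ has the same parity as $(m_1 + m_2) + (n_1 + n_2)$, and combining it with $c_\tau([C])$ contributes an even number. Since $Q_\tau([C])$ is manifestly even, the ECH index formula \eqref{eqn:echindex} gives $I([C]) \equiv 0 \pmod 2$. In particular no current has $I = 1$, so $d = 0$.

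The argument is essentially routine and no real obstacle arises. The only point worth double-checking is that the formulas from Section~\ref{secn:ell}, which are stated for the cobordism between two distinct ellipsoids $E_1$ and $E_2$, transfer without change to the symplectization of a single ellipsoid; this is immediate from their combinatorial nature, as they depend only on end multiplicities and Conley-Zehnder data attached to the monodromy angles.
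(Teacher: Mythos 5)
Your argument is correct, and its underlying idea --- that the ECH index of any current is even when every Reeb orbit is elliptic, so no $I=1$ currents exist --- is exactly the mechanism the paper relies on. The paper simply cites \cite[Lem.~4.1]{weinsteinstable} for this general parity fact (which holds for any nondegenerate contact manifold whose closed orbits are all elliptic), whereas you carry out the computation explicitly in the ellipsoid case; both routes reduce to the observation that $c_\tau$, $\op{CZ}^I_\tau$ and $Q_\tau$ combine to an even integer once all $\op{CZ}_\tau(\gamma^k)$ are odd.
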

\begin{proof}
As explained in section \ref{sec:prpn}, as our ellipsoids are irrational the Reeb vector field on the boundary has exactly two closed orbits, and they are both elliptic.  Fact~\ref{fct:ellipsoidfact} now follows, since it is shown for example in \cite[Lem. 4.1]{weinsteinstable} that the ECH chain complex differential vanishes for any nondegenerate contact manifold with only elliptic orbits.
\end{proof}

\subsection{Proof of the proposition}

We now have all the ingredients needed to prove Proposition~\ref{prop:mainprop}.

{\em Step 1.}  As stated above, the symplectic cobordism $X = E_1 \setminus \Psi(E_2)$ induces a map
\[ \Phi: ECH(\partial E_1) \to ECH(\partial E_2).\]
This map must be an isomorphism.  The reason for this is that the cobordism $X$ is diffeomorphic to a product, and the ECH cobordism map agrees\footnote{Indeed, this is currently the definition of the ECH cobordism map.} with the cobordism map on Seiberg-Witten Floer cohomology, which is known to be an isomorphism for product cobordisms.  Now consider the ECH generator $\alpha_2^{g_{n+1}}$.  By Fact~\ref{fct:ellipsoidfact}, we know that the ECH chain complex differential vanishes for the boundary of any irrational ellipsoid.  Hence, $[\alpha_2^{g_{n+1}}]$ defines a nonzero class in $ECH(\partial E_1)$.  Thus, $\Phi([\alpha_2^{g_{n+1}}]) \ne 0$.  We know by the ``holomorphic curve" axiom that for any orbit set $\Theta$ appearing in $\Phi([\alpha_2^{g_{n+1}}]) \ne 0$, there is a possibly broken $J$-holomorphic current from $\alpha_2^{g_{n+1}}$ to $\Theta$, of total ECH index $0$.

{\em Step 2.}  We will first explain why we must have $\Theta = \beta_1^{g_{n+2}}.$  This will follow from ECH index calculations for ellipsoids, together with the fact that the holomorphic building from $\alpha_2^{g_{n+1}}$ to $\Theta$ has total ECH index $0$.

First note that, as explained in the section \ref{secn:index}, the ECH index of any $J$-holomorphic current in $\overline{X}$ only depends on the asymptotics of the current.  It also follows from the calculations in section \ref{secn:ell} that there is a canonical  $\Z$-grading for ECH of the boundary of any ellipsoid with the property that the ECH cobordism map must preserve this grading.  This grading is given by
\[ \op{gr}(\gamma_1^{x_1}\gamma_2^{x_2}) = x_1 + x_2 + 2x_1x_2 + \op{CZ}_{\tau}^I(\gamma_1^{x_1}\gamma_2^{x_2}),\]
where $\tau$ is the trivialization used in section \ref{secn:ell}.  It turns out that
\begin{equation}
\label{eqn:gradingequation}
\op{gr}(\gamma_1^{x_1}\gamma_2^{x_2}) = 2 \# \lbrace (a,b) | \mathcal{A}( \gamma_1^{a}\gamma_2^{b}) < \mathcal{A}(\gamma_1^{x_1}\gamma_2^{x_2}) \rbrace,
\end{equation}
where $\mathcal{A}$ denotes the symplectic action, and $a$ and $b$ are both nonnegative integers.  This can be proved directly by interpreting $\op{gr}$ as a count of lattice points in a triangle determined by $(x_1,x_2)$, see e.g. Ex. 3.11 in \cite{echlecture}, but also follows from the fact that the ECH ``$U$"-map is a degree $-2$ isomorphism on ECH$(S^3)$ which decreases the action.

It follows from this that if $\epsilon$ is sufficiently small then $\op{gr}(\alpha_2^{g_{n+1}})= g_{n+1}^2+3g_{n+1}$.  It also follows from the arguments in the previous paragraph that there is a unique orbit set in any grading (this also follows from the computation of the ECH of $S^3$).  We now claim that
\[ \# \lbrace (a,b) | \mathcal{A}(\beta_1^{a}\beta_2^{b}) < \mathcal{A}(\beta_1^{g_{n+2}}) \rbrace = \frac{g_{n+1}^2+3g_{n+1}}{2}.\]  To see this, one computes
\begin{align*}
& \# \lbrace (a,b) | \mathcal{A}(\beta_1^{a}\beta_2^{b}) < \mathcal{A}(\beta_1^{g_{n+2}}) \rbrace \quad\quad\\
& =  \# \lbrace (a,b) | a + b (\frac{g_{n+2}}{g_n}+\epsilon) < g_{n+2} \rbrace \quad \quad \\
& =  \sum_{m=0}^{g_{n}-1} ( \lfloor g_{n+2}-m(\frac{g_{n+2}}{g_n}+\epsilon) \rfloor + 1) \quad \quad \\
& =  g_{n}g_{n+2} - \sum_{m=1}^{g_{n}-1} \lfloor m\frac{g_{n+2}}{g_n} \rfloor \\
& =  \frac{g_ng_{n+2}+g_{n+2}+g_{n}-1}{2} \\
& = \frac{g_{n+1}^2+3g_{n+1}}{2},
\end{align*}
where the second to last equality follows from the identity
\[ \sum_{i=0}^{q-1} \lfloor \frac{ip}{q} \rfloor = \frac{(p-1)(q-1)}{2}\]
for relatively prime positive integers $p$ and $q$, and the last line follows from a Fibonacci identity which is easily proved by induction.  Note that one can also prove by induction that $g_n$ and $g_{n+2}$ are always relatively prime.  It now follows that in fact $\Phi([\alpha_2^{g_{n+1}}]) = [\beta_1^{g_{n+2}}]$.

{\em Step 3.}  Because $\Phi([\alpha_2^{g_{n+1}}]) = [\beta_1^{g_{n+2}}]$, it follows from the properties of the ECH cobordism map explained in \S\ref{sec:prelim} that there is a broken $J$-holomorphic current $Z$ from $\alpha_2^{g_{n+1}}$ to $\beta_1^{g_{n+2}}$.  This broken current could in principle consist of multiple levels, and multiple connected components.  Call a symplectization level {\em trivial} if it is a union of branched covers of trivial cylinders.

\begin{claim}
\label{clm:mainclaim}
The current $Z$ has a single nontrivial level, consisting of a single somewhere injective (in fact embedded) connected component.
\end{claim}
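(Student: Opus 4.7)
The plan is to use the vanishing of the total ECH index of $Z$ (established above), together with standard ECH positivity inequalities and an action argument, to force $Z$ to consist of a single embedded connected cobordism-level curve.

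First, for generic cobordism-admissible $J$, every $J$-holomorphic current in a symplectization or in the cobordism has ECH index $\ge 0$, and the ECH index is additive under concatenation of the levels of a broken current. Since $I(Z)=0$, every level of $Z$ has $I=0$. A standard ECH result then shows that, for generic $J$, any ECH-index-zero current in a symplectization of a nondegenerate contact three-manifold is a disjoint union of branched covers of trivial cylinders, and such levels preserve the orbit set between their positive and negative boundaries. Hence only the cobordism level $C$ of $Z$ is nontrivial, and it is asymptotic to $\alpha_2^{g_{n+1}}$ at $+\infty$ and $\beta_1^{g_{n+2}}$ at $-\infty$ with $I(C)=0$.

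Writing $C=\sum m_iC_i$ as a sum of distinct irreducible somewhere injective components with positive multiplicities, the ECH index inequality gives $I(C_i)\ge\operatorname{ind}(C_i)+2\delta(C_i)\ge 0$. A standard ECH calculation shows that proper multiple covers of somewhere injective curves with all-elliptic asymptotics strictly increase the contribution to $I$ of the total current; together with $I(C)=0$ this forces $m_i=1$ and $I(C_i)=\operatorname{ind}(C_i)=\delta(C_i)=0$ for each $i$, so in particular each $C_i$ is embedded. The partition conditions of Section~2.4, applied with $\theta_{\alpha_2}$ slightly greater than $1$, then pin down each positive end as a simple cover of $\alpha_2$.

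The main obstacle is the connectedness of $C$, which I would address via an action argument. Each component $C_i$ must have positive symplectic area, giving
\[
n_1^{(i)} \;<\; \lambda\, m_2^{(i)}\Bigl(\tfrac{g_{n+2}}{g_{n+1}}+\epsilon\Bigr),
\]
where $m_2^{(i)}$ and $n_1^{(i)}$ are the total multiplicities of $C_i$ at $\alpha_2$ and $\beta_1$ respectively. For $\lambda$ close to $1$ and $\epsilon$ small, the right-hand side is a small perturbation of $m_2^{(i)}g_{n+2}/g_{n+1}$, which is not an integer for $m_2^{(i)}\in\{1,\dots,g_{n+1}-1\}$ since $\gcd(g_{n+1},g_{n+2})=1$ (an easy Fibonacci induction). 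Consequently for $\lambda$ sufficiently close to $1$ and $\epsilon$ sufficiently small we have $n_1^{(i)}\le\lfloor m_2^{(i)}g_{n+2}/g_{n+1}\rfloor<m_2^{(i)}g_{n+2}/g_{n+1}$ for each such component; any component with $m_2^{(i)}=0$ would have only negative asymptotic ends and thus negative symplectic area, which is impossible. Summing over $i$ then gives $g_{n+2}=\sum_i n_1^{(i)}<g_{n+2}$, a contradiction unless there is only one summand. Hence $C$ has a single component, which is therefore a connected embedded somewhere injective curve, completing the proof of the claim.
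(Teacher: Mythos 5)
Your overall architecture is close to the paper's, and the area/coprimality argument you give for connectedness is sound (and is essentially a concrete instance of the ingredients the paper packages into Lemma~\ref{actions}). But there is a genuine gap in the step that rules out multiple covers of the cobordism level. You assert that ``a standard ECH calculation shows that proper multiple covers of somewhere injective curves with all-elliptic asymptotics strictly increase the contribution to $I$ of the total current.'' This is not a standard fact for cobordism currents. The ECH index depends quadratically on the relative homology class through $Q_\tau$ and nonlinearly through the Conley--Zehnder sum, so $I(mC)$ is not controlled by $m\cdot I(C)$, and the usual low-index classification (Hutchings' Proposition~3.7 in the lecture notes, or Prop.~7.15 in \cite{hutchings_absolute}) is a symplectization statement whose proof crucially exploits $\R$-invariance (every non-trivial somewhere injective curve has $\operatorname{ind}\ge 1$). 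In a cobordism there is no such lower bound, and an $I=0$ current can in general be multiply covered; this is precisely why the holomorphic curve axiom only delivers a current, not an embedded curve, and why applications (including this paper) must do extra work. The paper's route is to prove Lemma~\ref{actions}, an action lower bound for somewhere injective curves whose positive asymptotics have grading below $\operatorname{gr}(\alpha_2^{g_{n+1}})$, established via monotonicity of ECH capacities of $E(1,g_{n+2}/g_n)$ versus the ball and continuity in the ellipsoid parameters; since the underlying curve of a multiple cover has smaller positive action (hence smaller grading), Lemma~\ref{actions} gives it more action than the whole cobordism level has available, a contradiction. The same lemma rules out disconnected cobordism levels, and only afterwards does the paper invoke $I\ge 0$ level-by-level to kill the symplectization levels and obtain $I(B)=0$, hence embeddedness.

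The good news is that your area argument, if pushed a little further, actually does the job that your dubious ECH claim was supposed to do. Writing the cobordism level as $C=\sum_i m_i C_i$ with $C_i$ somewhere injective, positive-area considerations and $\gcd(g_{n+1},g_{n+2})=1$ force, for $\lambda$ close to $1$ and $\epsilon$ small, $n_1^{(i)} < m_2^{(i)}\,g_{n+2}/g_{n+1}$ whenever $1\le m_2^{(i)}\le g_{n+1}-1$. Any nontrivial decomposition (either $k\ge 2$ components or some $m_i\ge 2$) forces every $m_2^{(i)}<g_{n+1}$, and then summing with multiplicities gives $g_{n+2}=\sum_i m_i n_1^{(i)} < \sum_i m_i m_2^{(i)}\,g_{n+2}/g_{n+1}=g_{n+2}$, a contradiction. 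So if you drop the unsupported ``strictly increases $I$'' claim and instead run your number-theoretic area estimate with the multiplicities $m_i$ in place, you recover both somewhere-injectivity and connectedness in one stroke; the index inequality then gives embeddedness from $I=0$ exactly as you say. This is a more elementary alternative to the paper's Lemma~\ref{actions}; the paper's grading/capacity version is more robust (it does not require you to know in advance which orbits appear) and is what lets it also conclude that the symplectization levels sitting between $\alpha_2^{g_{n+1}}$ and the cobordism level are trivial without any separate bookkeeping.
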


Note that such a level is necessarily a cobordism level.  Claim~\ref{clm:mainclaim} will follow from the following:

\begin{lemma} \label{actions}
Fix orbit sets $\alpha$ on $ \partial \lambda E(\frac{g_{n+2}}{g_{n+1}},\frac{g_{n+2}}{g_{n+1}}+\epsilon)$ and $\beta$ on $\partial E(1,\frac{g_{n+2}}{g_{n}}+\epsilon)$.  Let $C \in \mathcal{M}(\alpha,\beta)$ be a somewhere injective connected $J$-holomorphic curve in $\overline{X}$, and asume that $\op{gr}(\alpha) < \op{gr}(\alpha_2^{g_{n+1}})$.  If $\epsilon$ is sufficiently small, and $\lambda$ is sufficiently close to $1$, then $\mathcal{A}(C) > (\lambda-1)g_{n+2} + \epsilon \lambda g_{n+1}$.
\end{lemma}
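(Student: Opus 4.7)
The plan is to rewrite the desired inequality as a strict positivity statement at the degenerate parameters $\lambda=1$, $\epsilon=0$, then extend by continuity. Observe that
\[
(\lambda-1)g_{n+2} + \lambda g_{n+1}\epsilon \;=\; \mathcal{A}(\alpha_2^{g_{n+1}}) - \mathcal{A}(\beta_1^{g_{n+2}}),
\]
so the conclusion is equivalent to $\mathcal{A}(\alpha) - \mathcal{A}(\beta) > \mathcal{A}(\alpha_2^{g_{n+1}}) - \mathcal{A}(\beta_1^{g_{n+2}})$. At $(\lambda, \epsilon) = (1, 0)$ the right-hand side vanishes, so it suffices to prove $\mathcal{A}(\alpha) > \mathcal{A}(\beta)$ strictly at those limiting values, uniformly over the admissible data.

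Since $C$ is somewhere injective and $J$ is taken generic, $\op{ind}(C) \ge 0$ and Hutchings' index inequality gives $\op{ind}(C) \le I(C) = \op{gr}(\alpha) - \op{gr}(\beta)$, the equality using the canonical $\Z$-grading on ECH of an ellipsoid boundary. Combined with the hypothesis and the identity $\op{gr}(\alpha_2^{g_{n+1}}) = \op{gr}(\beta_1^{g_{n+2}})$ established in Step~2, this forces $\op{gr}(\beta) < \op{gr}(\beta_1^{g_{n+2}})$ and hence $\mathcal{A}(\beta) < g_{n+2}$. Writing $\alpha = \alpha_1^{m_1}\alpha_2^{m_2}$, $\beta = \beta_1^{n_1}\beta_2^{n_2}$, and $m = m_1 + m_2$, the grading formula from Section~\ref{secn:ell} forces $m \le g_{n+1}$, with $m_2 < g_{n+1}$ when equality holds. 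If $m = g_{n+1}$ then $\mathcal{A}(\alpha)|_{\lambda=1,\epsilon=0} = g_{n+2} > \mathcal{A}(\beta)$. If $0 < m < g_{n+1}$, suppose for contradiction that $\mathcal{A}(\alpha) = \mathcal{A}(\beta)$ at $(1, 0)$: then $m\, g_{n+2}/g_{n+1} = n_1 + n_2 g_{n+2}/g_n$, and multiplying by $g_n g_{n+1}$ and invoking the Fibonacci identity $g_n g_{n+2} = g_{n+1}^2 + 1$ yields
\[
m(g_{n+1}^2 + 1) \;=\; g_{n+1}(n_1 g_n + n_2 g_{n+2}),
\]
so $m \equiv 0 \pmod{g_{n+1}}$, contradicting $0 < m < g_{n+1}$. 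Hence $\mathcal{A}(\alpha) \ne \mathcal{A}(\beta)$, and together with $\mathcal{A}(C) \ge 0$ we conclude $\mathcal{A}(\alpha) > \mathcal{A}(\beta)$. The case $m = 0$ cannot arise, since a somewhere injective non-constant curve in $\overline{X}$ must have strictly positive symplectic area.

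The grading bound $\op{gr}(\beta) \le \op{gr}(\alpha) < g_{n+1}^2 + 3 g_{n+1}$ confines the multiplicities of $\alpha$ and $\beta$ to a fixed finite set, so there are only finitely many admissible pairs $(\alpha, \beta)$. On this finite set, $\mathcal{A}(\alpha) - \mathcal{A}(\beta)$ is continuous in $(\lambda, \epsilon)$ and, by the previous paragraph, strictly positive at $(1, 0)$, hence bounded below there by some $\delta > 0$. Since $(\lambda-1)g_{n+2} + \lambda g_{n+1}\epsilon \to 0$ as $(\lambda, \epsilon) \to (1, 0)$, the lemma follows for $\lambda$ sufficiently close to $1$ and $\epsilon$ sufficiently small. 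The main obstacle is the Fibonacci-identity step producing the congruence $m \equiv 0 \pmod{g_{n+1}}$; once that is in hand, the rest is standard continuity and ECH-index bookkeeping.
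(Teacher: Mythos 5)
Your proof is correct and in essence follows the same strategy as the paper: both reduce the claim to an arithmetic impossibility at the degenerate parameters $(\lambda,\epsilon)=(1,0)$, and then conclude by finiteness of the admissible orbit sets together with continuity of the actions. The arithmetic cores are equivalent---your Catalan-type identity $g_n g_{n+2}=g_{n+1}^2+1$ is exactly what makes $\gcd(g_{n+1},g_n g_{n+2})=1$, which is the coprimality fact the paper invokes. The difference is primarily one of packaging: the paper phrases the crucial estimate as a strict inequality between ECH capacity sequences, $\mathcal{N}(1,g_{n+2}/g_n)_k < \mathcal{N}(g_{n+2}/g_{n+1},g_{n+2}/g_{n+1})_k$ for $k<(g_{n+1}^2+3g_{n+1})/2$, and it obtains the non-strict inequality from the McDuff--Schlenk embedding (monotonicity of ECH capacities), upgrading to strictness via coprimality. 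You instead bound $m=m_1+m_2\le g_{n+1}$ directly from the grading hypothesis and work with the raw action formula, which sidesteps the explicit appeal to capacity monotonicity.

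One step that should be tightened is ``together with $\mathcal{A}(C)\ge 0$ we conclude $\mathcal{A}(\alpha)>\mathcal{A}(\beta)$'': as written this is applied at $(1,0)$, but positivity of area is a statement about the actual $(\lambda,\epsilon)$, where $C$ lives. What is really happening is a dichotomy. For each of the finitely many admissible pairs $(\alpha,\beta)$, your Fibonacci step rules out $\mathcal{A}(\alpha)|_{(1,0)}=\mathcal{A}(\beta)|_{(1,0)}$; so either $\mathcal{A}(\alpha)|_{(1,0)}>\mathcal{A}(\beta)|_{(1,0)}$, in which case continuity gives the uniform lower bound, or $\mathcal{A}(\alpha)|_{(1,0)}<\mathcal{A}(\beta)|_{(1,0)}$, in which case for $(\lambda,\epsilon)$ near $(1,0)$ no holomorphic curve with those asymptotics can exist (negative area), so the lemma is vacuous for that pair. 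Spelling out this case split makes the continuity argument airtight. The paper avoids the dichotomy entirely by importing the non-strict inequality from McDuff--Schlenk, which is somewhat cleaner, but your route is valid and slightly more self-contained.
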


Here, the {\em action} $\mathcal{A}(C)$ is defined by
\[ \mathcal{A}(C) = \mathcal{A}(\alpha) - \mathcal{A}(\beta).\]

\begin{proof}

Given positive real numbers $a,b$, let $\mathcal{N}(a,b)$ denote the sequence whose $k^{th}$ term (indexed starting at $0$) is the $(k+1)^{st}$ smallest element in the matrix $(ma+nb)_{m,n \in \Z_{\ge 0}}$.  The motivation for studying this sequence is as follows.  By Step $2$, $\mathcal{N}(a,b)_k$ is the action of the unique ECH generator for $\partial E(a,b)$ in grading $2k$, if $a/b$ is irrational.  Moreover, for fixed $k$, $\mathcal{N}(a,b)_k$ is a continuous function of $a$ and $b$ (in fact, $\mathcal{N}(a,b)$ is the sequence of ECH capacities of the ellipsoid $E(a,b)$).

With this in mind, note first that we know from the calculations in Step $2$ that
\[\mathcal{N}(1, g_{n+2}/g_n)_{(g^2_{n+1}+3g_{n+1})/2}  = \mathcal{N}(g_{n+2}/g_{n+1},g_{n+2}/g_{n+1})_{(g^2_{n+1}+3g_{n+1})/2}.\]
Moreover, it follows from \cite{ms}, see also e.g. \cite{acg}, that
\[\mathcal{N}(1, g_{n+2}/g_n)_k \le \mathcal{N}(g_{n+2}/g_{n+1},g_{n+2}/g_{n+1})_k\] for all $k$.  In fact, we now claim that we must have $\mathcal{N}(1, g_{n+2}/g_n)_k < \mathcal{N}(g_{n+2}/g_{n+1},g_{n+2}/g_{n+1})_k$ for $0 \le k < (g^2_{n+1}+3g_{n+1})/2.$  Otherwise, there would exist nonnegative integers $x$, $y$ and $T$ such that
\[ x + \frac{g_{n+2}}{g_n}y = \frac{g_{n+2}}{g_{n+1}}T,\]
or, rearranging,
\[ \frac{g_{n+1}}{g_{n+2}}x + \frac{g_{n+1}}{g_n}y = T,\]
with $T \le g_{n+1}$.  One can show by induction that $g_{n+1}$ and $g_ng_{n+2}$ are relatively prime; hence, we must have $T = g_{n+1}$ and so \[\frac{g_{n+2}}{g_{n+1}}T = \mathcal{N}(g_{n+2}/g_{n+1},g_{n+2}/g_{n+1})_{(g^2_{n+1}+3g_{n+1})/2}. \]
Arguing as in Step $2$, we see that if $k < (g^2_{n+1}+3g_{n+1})/2$, then $\mathcal{N}(1,g_{n+2}/g_n)_k < \mathcal{N}(1,g_{n+2}/g_n)_{(g^2_{n+1}+3g_{n+1})/2},$ and hence the claim follows.

We can now complete the proof of the Lemma.  Since $C$ is somewhere injective, we must have $I(C) \ge 0$, for example by \cite{hutchings_absolute}. Therefore $\op{gr}(\beta) \le \op{gr}(\alpha)$. As explained in Step $2$, the action of a generator of $ECH(\partial E(a,b))$ is a strictly increasing function of its grading.  Thus, $\mathcal{A}(\alpha)-\mathcal{A}(\beta) \ge \mathcal{A}(\alpha)-\mathcal{A}(\beta')$, where $\beta'$ is the unique orbit set with $\op{gr}(\beta')=\op{gr}(\alpha) < \op{gr}(\alpha_2^{g_n+1})$  By the above claim, if $\epsilon$ is sufficiently small and $\lambda$ is close enough to $1$, then by continuity of the functions $\mathcal{N}(a,b)_k$ we have that $\mathcal{A}(\alpha)-\mathcal{A}(\beta')$ is bounded below by some fixed positive number independent of $k=\op{gr}(\alpha)$.  This implies the Lemma by again choosing $\epsilon$ sufficiently small and $\lambda$ close enough to $1$.

\end{proof}

We now explain why the lemma implies Claim~\ref{clm:mainclaim}.  Assume that there was such a building, and look at the cobordism level.  This consists of a (possibly disconnected) holomorphic current $B$, with $\mathcal{A}(B) \le (\lambda-1)g_{n+2} + \epsilon \lambda g_{n+1}$ (the action difference between $\alpha_2^{g_{n+1}}$ and $\beta_1^{g_{n+2}}$).  Look at the underyling somewhere injective curve for any component $\tilde{C}$ of this current; this curve $C$ must also satisfy $\mathcal{A}(C) \le (\lambda-1)g_{n+2} + \epsilon \lambda g_{n+1}$, and if $\tilde{C}$ is an honest multiple cover, then $C$ must be asymptotic at $+\infty$ to an orbit set with action strictly less than $\alpha_2^{g_{n+1}}$, and hence grading strictly less than the grading of $\alpha_2^{g_{n+1}}$.  It follows from Lemma \ref{actions} that there are no such curves if $\epsilon$ is close to $0$ and $\lambda$ close to $1$; hence, $\tilde{C}$ must be somewhere injective.  The same argument shows that $B$ must consist of a single connected component.

It now follows by general properties of the ECH index, see \cite{hutchings_absolute}, that $I(B) \ge 0$.  Since the total index of the building is $0$, and $I(S) \ge 0$ for any symplectization level of the building, with equality if and only if $S$ is a union of branched covers of trivial cylinders (again by general properties of the ECH index), Claim~\ref{clm:mainclaim} now follows.

{\em Step 4.}  We can now complete the proof of Proposition~\ref{prop:mainprop}.  By the previous steps, there is a connected somewhere injective curve  $C \in \mathcal{M}(\alpha_2^{g_{n+1}},\beta_1^{g_{n+2}})$.  It remains to show that this curve has the properties claimed in the proposition.

First, note that the partition conditions from earlier in this section show that if $\epsilon$ is small enough, then $C$ has $g_{n+1}$ positive ends.  This is because one can make the monodromy angle for $\alpha_2$ arbitrarily small and positive mod $1$ by making $\epsilon$ sufficiently small, so that this claim follows by the definition of the positive partition.

We know by the formulas in section \ref{secn:index} that for any current $C$,
\[I([C])-J_0([C]) = 2c_{\tau}([C])+ \op{CZ}_{\tau}^{\op{top}}([C]),\]
where $\op{CZ}_{\tau}^{\op{top}}([C]) = \sum_i \op{CZ}_{\tau}(\alpha_i^{m_i}) - \sum_j \op{CZ}_\tau (\beta_j^{n_j})$.  We also know that for our particular somewhere injective curve $C$ we have $I(C) = 0$.  It follows from this, and the formulas in sections \ref{secn:index} and \ref{secn:ell}, that
\[ J_0(C) = 2g_{n+2} - 4g_{n+1} + 2 (g_n-1).\]
By the inequality on $J_0$ at the end of section \ref{secn:index} we therefore have
\[ 2(g-1+\delta(C)) + \sum_{\gamma}(2n_{\gamma} - 1) \le 2g_{n+2} - 4g_{n+1} + 2 (g_n-1),\]
hence
\[ 2(g+x) + \delta(C) \le 2(g_{n+2}-3g_{n+1}+g_n)+2 = 2,\]
where $x$ denotes the number of negative ends of $C$, and we have applied an identity for odd index Fibonacci numbers that is easily proved by induction.  It follows that $g=\delta(C)=0$, and $x = 1$.  This proves Proposition~\ref{prop:mainprop}.

\end{section}

\begin{section}{Holomorphic curves for the product embedding.}\label{secn3}

We begin by describing our cobordism and then the moduli space of interest; the basic setup we describe here is similar to the setup in \cite[\S 3]{hk}. We want to understand embeddings in any dimension $2N \ge 6$, but for the analysis in this section we will assume that $N=3$. This simplifies the notation, but does not result in any loss of generality because we will not use any index formulas which may be dimension dependent.

Recall from Theorem \ref{cfn} that there is an embedding
\[ \Phi: E(1,b_n+\epsilon) \to \op{int}(E(c,c+\epsilon)),\] where $b_n = \frac{g_{n+2}}{g_n}$ and $c$ can be chosen slightly larger than $g_{n+2}/g_{n+1}.$
Let $X$ be the cobordism associated to this embedding, and let $\overline{X}$ be the manifold obtained by attaching cylindrical ends to $X$.

For any $S$ we can prolong the embedding $\Phi$ to a map
\[ \Psi: E(1,b_n+\epsilon,S) \to \op{int}(E(c,c+\epsilon)) \times \mathbb{C},\]
given by
\[ (z_1,z_2,z_3) \to (\Phi(z_1,z_2),z_3).\]

The projection of the image of $\Psi$ to the $\C$ factor lies inside some large open disc $B^2(T)$.  The map $\Psi$ therefore induces an embedding
\[ \tilde{\Psi}:E(1,b_n+\epsilon,S) \to \op{int}(E(c,c+\epsilon)) \times \mathbb{C}P^1(2T),\]
where $T$ is some large real number that we will say more about later.  It is convenient to think of $B^2(T)$ as embedded in $\mathbb{C}P^1(2T)$ as the lower hemisphere. We can remove the image of $\tilde{\Psi}$ to get a symplectic cobordism $M$.  Attach ends to $M$ to get a completed symplectic manifold $\overline{M}$.

The manifold $\partial E(1,b_n+\epsilon,S)$ is contact, and the manifold $\partial E(c,c+\epsilon) \times \mathbb{C}P^1(2T)$ has a natural stable Hamiltonian structure. We will study $J$-holomorphic cuves asymptotic to appropriate Reeb orbits, for these stable Hamiltonian structures.

To specify these orbits, first note that we can regard the orbit $\beta_1$ from section \ref{secn2} as an orbit on $\partial E(1,b_n+\epsilon,S)$.  This orbit is non-degenerate.  We can regard the orbit $\alpha_2$ from section \ref{secn2} as an orbit on $\partial E(c,c+\epsilon) \times \mathbb{C}P^1(2T),$ by thinking of it as $\alpha_2 \times \lbrace p \rbrace$.  The point $p$ is chosen as follows.  There is an $S^1$ action on $\mathbb{C}P^1(2T)$, with a unique fixed point in the image of the projection of our embedding to the $\mathbb{C}P^1$ factor.  This is the point $p$.    It will be convenient to choose a coordinate $z_3$ on a neighborhood of this fixed point, such that $z_3=0$ is the fixed point.

We now specify the set of almost complex structures that we want to consider.  First note that $E(1,b_n+\epsilon,S)$ and $E(c,c+\epsilon) \times \mathbb{C}P^1(2T)$ both have an $S^1$ action, given by acting on the third factor.  Moreover, we can arrange it so that the embedding $\tilde{\Psi}$ is equivariant with respect to this action.  Thus, the manifold $\overline{M}$ has an $S^1$ action.  Later we will want to choose $J$ to take advantage of this.  Also, we will want to choose $J$ so that the curves we want to study avoid the point at $\infty$ in $\mathbb{C}P^1(2T)$.  To accomplish this, denote by $U(T)$ the subset \[\overline{E(c,c+\epsilon)} \times ((\mathbb{C}P^1(2T) \setminus B^2(T)),\]
where $\overline{E(c,c+\epsilon)}$ denotes the completion formed by attaching a cylindrical end.  Then, since $\tilde{\Psi}E(1,b_n+\epsilon,S)$ does not intersect this subset, we can regard $U(T)$ as a subset of $\overline{M}$.

Now first let $\mathcal{J}(T)$ denote the space of cobordism admissible almost complex structures on $\overline{M}$.  Also, fix a positive real number $R$.  Let $C$ be a curve in $\overline{M}$ asymototic to orbits $\alpha$ at $(\partial E(c,c+\epsilon)) \times \mathbb{C}P^1(2T)$ and $\beta$ at $\partial E(1,b_n+\epsilon,S)$.  We can define the {\em action} of $C$ as before by
\[ \mathcal{A}(C) = \mathcal{A}(\alpha) - \mathcal{A}(\beta).\]
Let $\mathcal{J}_R(T)$ denote the space of almost complex structures such that any curve $C$ with $\mathcal{A}(C) \le  d\pi R^2$ and one negative end has image contained in the interior of $U(T)^c$.  Here, $d=g_{n+1}$.

\begin{lemma}
\label{lem:jexist} Given $R$, for sufficiently large $T$, the space $\mathcal{J}_R(T)$ is open and nonempty.
\end{lemma}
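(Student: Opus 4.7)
The plan is to construct an explicit element of $\mathcal{J}_R(T)$ using a product structure near the ``infinity'' region $U(T)$, then use the symplectic action bound together with a Schwarz reflection/area argument on the $\mathbb{C}P^1$-projection to rule out curves entering $U(T)$; openness follows from a standard Gromov/SFT compactness argument.

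For nonemptiness, fix a small $\delta>0$ and set
\[
V_\delta \;=\; \overline{E(c,c+\epsilon)}\times\bigl(\mathbb{C}P^1(2T)\setminus\overline{B^2(T-\delta)}\bigr),
\]
an open neighborhood of $\overline{U(T)}$ in $\overline{M}$ on which the ambient symplectic form splits as $\omega_E+\omega_{\mathbb{C}P^1}$. Take $T$ large enough that $\tilde\Psi(E(1,b_n+\epsilon,S))$ is disjoint from $V_\delta$, that the positive-end point $p$ and the $z_3$-projection of $\partial E(1,b_n+\epsilon,S)$ both lie in $B^2(T-\delta)$, and that $T+\delta>d\pi R^2$. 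Choose $J_0 \in \mathcal{J}(T)$ cobordism admissible, with $J_0 = J_1 \times j_0$ on $V_\delta$, where $j_0$ is the standard complex structure on $\mathbb{C}P^1$ and $J_1$ is cobordism admissible on the completion of $E(c,c+\epsilon)$; this is possible since $V_\delta$ is disjoint from the embedding region.

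Next I verify $J_0 \in \mathcal{J}_R(T)$. Let $C$ be a $J_0$-holomorphic curve with the asymptotics of the setup (positive ends at $\alpha_2 \times \{p\}$, negative end(s) on $\partial E(1, b_n+\epsilon, S)$) and $\mathcal{A}(C)\le d\pi R^2$. By the choice of $T$ the asymptotic ends of $C$ lie outside $V_\delta$, so $K:=C^{-1}(\overline{V_\delta})$ is a compact subsurface of the domain with $\partial K$ mapping into $\partial V_\delta=\overline{E(c,c+\epsilon)}\times\partial B^2(T-\delta)$. On $V_\delta$ the projection $f:=\pi_{\mathbb{C}P^1}\circ C|_K$ is $j_0$-holomorphic, and maps the compact Riemann surface with boundary $K$ into the topological closed disk $\overline{A_{T-\delta}} := \mathbb{C}P^1(2T) \setminus B^2(T-\delta)^{\circ}$, sending $\partial K$ to $\partial A_{T-\delta}$. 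Such a map has a nonnegative boundary degree $n$, and counting preimages of generic interior points (equivalently, Schwarz reflecting across $\partial A_{T-\delta}$ to obtain a degree-$n$ map from the double of $K$ to $\mathbb{C}P^1$) gives $\int_K f^*\omega_{\mathbb{C}P^1}=n(T+\delta)$, where $T+\delta$ is the symplectic area of $A_{T-\delta}$. Positivity of both $\omega_E$ and $\omega_{\mathbb{C}P^1}$ on $J_0$-holomorphic tangent planes in $V_\delta$ then yields
\[
n(T+\delta) \;=\; \int_K f^*\omega_{\mathbb{C}P^1} \;\le\; \int_K\omega \;\le\; \mathcal{A}(C) \;\le\; d\pi R^2 \;<\; T+\delta,
\]
forcing $n=0$ and hence $f$ constant on each component of $K$, with value on $\partial A_{T-\delta}$. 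This places the image of $C|_K$ in $\partial V_\delta$, which is disjoint from $U(T)$ (since $\partial B^2(T-\delta)$ is disjoint from $\mathbb{C}P^1(2T)\setminus B^2(T)$), while $C|_{K^c}$ lies outside $\overline{V_\delta}\supset U(T)$. So the image of $C$ lies in the interior of $U(T)^c$, and $J_0\in\mathcal{J}_R(T)$.

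For openness, observe that the area bound above controls \emph{any} $J$-holomorphic curve in the cobordism of action $\le d\pi R^2$ with ends on the designated orbit sets---the single-negative-end condition was never used. Suppose $J_k\to J_0$ with $J_k\notin\mathcal{J}_R(T)$, and choose witnessing $J_k$-holomorphic curves $C_k$. The uniform action bound allows Gromov/SFT compactness to extract a broken $J_0$-holomorphic limit whose cobordism level $C_\infty$ has action $\le d\pi R^2$, asymptotic ends among the designated orbit sets, and image meeting $\overline{U(T)}$. But the strong form of the area bound applied to $C_\infty$ forbids this, giving the desired openness in a $C^\infty$-neighborhood of $J_0$. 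The main subtlety is that the cobordism level of an SFT limit may have more negative ends than the original curves, but the area argument is insensitive to this count, so it still applies.
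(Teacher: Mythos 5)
The paper's own ``proof'' is only a citation to \cite[Lem.~3.3]{hk}, so there is no detailed argument here to compare line-by-line against. Your overall strategy (take $J$ to split as a product near the far region of $\mathbb{C}P^1(2T)$, use holomorphicity of the $\mathbb{C}P^1$-projection there to extract an area lower bound of the form $n(T+\delta)$, and play this off against an upper bound) is indeed the standard shape of argument for this kind of lemma and is close in spirit to what Hind--Kerman do. However, there is a genuine gap in the central inequality.

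The problematic step is $\int_K \omega \le \mathcal{A}(C)$. You are implicitly using that the total symplectic area $\int_C \omega$ is controlled by the action $\mathcal{A}(C)=\mathcal{A}(\alpha)-\mathcal{A}(\beta)$. That equality/inequality comes from Stokes' theorem and requires $\omega$ to be exact on the completed cobordism. Here it is not: $\overline{M}$ is (the completion of) $E(c,c+\epsilon)\times\mathbb{C}P^1(2T)$ with an ellipsoid removed, and $[\omega_{\mathbb{C}P^1}]$ is a nontrivial class in $H^2$. Concretely, if the curve $C$ has $\mathbb{C}P^1$-degree $d$, then $\int_C\omega = \mathcal{A}(C) + 2Td$, so the area can be enormous even when $\mathcal{A}(C)\le d\pi R^2$ is tiny. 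Your chain
\[
n(T+\delta)=\int_K f^*\omega_{\mathbb{C}P^1}\le \int_K\omega\le\mathcal{A}(C)
\]
is therefore circular: what it would need as input (that $d$, and hence $n$, is zero, i.e.\ that the curve does not cross into $U(T)$) is exactly the conclusion you are trying to establish. The point of the lemma is precisely to rule out the $\mathbb{C}P^1$-degree being positive for curves with the given asymptotics, and that cannot be done by bounding area through the action alone.

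To close the gap you would need an independent handle on the $\mathbb{C}P^1$-degree. For instance, one can exploit that all the asymptotic orbits ($\alpha_2\times\{p\}$ at $+\infty$, orbits over the origin in the $z_3,\dots,z_N$ coordinates at $-\infty$) project to the fixed point $p\in B^2(T)$, so that after choosing $J$ to be a split product $J_4\times j_0$ on all of $\overline{E(c,c+\epsilon)}\times\mathbb{C}P^1$ away from a compact neighborhood $W$ of the embedded ellipsoid, the projection extends to a holomorphic map from the capped-off domain with boundary mapping into $\pi_{\mathbb{C}P^1}(W)\subset B^2(T)$; the degree is then determined by the relative homology class of $C$, which is fixed (or a priori bounded) in the moduli spaces actually used in Sections~3 and~4. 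In particular the curves in $\mathcal{M}_J(g_{n+1}\alpha_2,\beta_1^{g_{n+2}})$ and the limits arising in the compactness argument have a fixed $\omega$-energy, and it is this, not the naive action bound, that lets the area argument go through. A similar remark applies to your openness paragraph: the SFT compactness you invoke needs a uniform $\omega$-energy bound, which again does not follow from an action bound alone in this non-exact setting. I'd encourage you to rework the argument with the relative homology class (equivalently, the $\mathbb{C}P^1$-degree) as an explicit input, as in \cite[Lem.~3.3]{hk}.
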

\begin{proof}

This is proved as in \cite[Lem. 3.3]{hk}.

\end{proof}

We will now write $\mathcal{J}_R$ instead of $\mathcal{J}_R(T)$ when the explicit value of $T$ is not needed and all we need to know is that we have chosen $T$ large enough so that Lemma~\ref{lem:jexist} applies.  Now let $\overline{\mathcal{J}}_R \subset \mathcal{J}_R$ denote the space of $S^1$ invariant almost complex structures, and recall the cobordism $\overline{X}$ from \ref{sec:prpn}.  There is an inclusion $\overline{X} \subset \overline{M}$ induced by the map $(z_1,z_2) \to (z_1,z_2,p)$.  Note that if $J \in \overline{\mathcal{J}}_R,$ then this inclusion is $J$-holomorphic.  As in \cite{hk}, say that a $J \in \overline{\mathcal{J}}_R(T)$ is {\em suitably restricted} if its restriction to $\overline{X}$ is regular for all somewhere injective, finite energy curves of genus $0$ in $\overline{X}$.

We can now state the main goals of this section. Given $J \in \overline{\mathcal{J}}_R$, let $\mathcal{M}_{J}(g_{n+1}\alpha_2,\beta_1^{g_{n+2}})$ denote the moduli space of genus $0$ somewhere injective $J$-holomorphic curves in $\overline{M}$ with $g_{n+1}$ positive punctures and one negative puncture, that are asymptotic to translations of $\alpha_2$ at positive infinity, and asymptotic to $\beta_1^{g_{n+2}}$ at negative infinity.  (Note that for curves in higher dimensional cobordisms, we will always specify the number of positive and negative punctures and the corresponding multiplicities, rather than just the total orbit set).  In \S\ref{secn4}, we will show that this space is compact.   For $S^1$ invariant $J$ the moduli space $\mathcal{M}_0$ from the end of section \ref{sec:prpn} of curves in $\overline{X}$ is naturally a subset of $\mathcal{M}_{J}(g_{n+1}\alpha_2,\beta_1^{g_{n+2}})$. It turns out that both of these moduli spaces have virtual dimension $0$; a discussion of the index formulas in higher dimension is postponed until section \ref{secn4}.

The first result is the following.
\begin{proposition}\label{nontriv}
If $J \in \overline{\mathcal{J}}_R$ is regular and suitably restricted, then the oriented cobordism class of $\mathcal{M}_{J}(g_{n+1}\alpha_2,\beta_1^{g_{n+2}})$ is nontrivial.
\end{proposition}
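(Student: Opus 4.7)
\emph{Proof proposal.} The plan is to use the $S^1$-equivariance of $J$ to reduce $\mathcal{M}_J(g_{n+1}\alpha_2, \beta_1^{g_{n+2}})$ to the four-dimensional moduli space $\mathcal{M}_0$ from Proposition~\ref{prop:mainprop}, and then extract nontriviality of the oriented cobordism class from the existence statement there together with an automatic transversality argument in the normal direction.

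First I would show that every element of $\mathcal{M}_J$ has image contained in $\overline{X}$. Since $J$ is $S^1$-invariant, the $S^1$-action on $\overline{M}$ preserves $J$-holomorphicity and so induces an action on $\mathcal{M}_J$. Regularity of $J$ makes $\mathcal{M}_J$ a manifold of dimension equal to its virtual dimension, which is $0$. A free $S^1$-orbit would be one-dimensional and hence incompatible with $0$-dimensionality, so every curve in $\mathcal{M}_J$ is $S^1$-fixed. The fixed locus of the $S^1$-action on $\overline{M}$ decomposes as $\overline{X}$ (along $z_3=p$) together with the slice at infinity ($z_3=\infty$). Choosing $R$ so that $d\pi R^2$ exceeds the action difference $\mathcal{A}(\alpha_2^{g_{n+1}})-\mathcal{A}(\beta_1^{g_{n+2}})$, the hypothesis $J\in\mathcal{J}_R$ forces every curve in $\mathcal{M}_J$ to have image in the interior of $U(T)^c$, and in particular to avoid the slice at infinity. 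This produces a set-theoretic identification $\mathcal{M}_J=\mathcal{M}_0$.

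Next I would upgrade this to an identification of regular $0$-manifolds. At each $S^1$-fixed curve the linearized Cauchy-Riemann operator in $\overline{M}$ splits as a direct sum of a tangential part, which is the linearization of $\bar\partial_J$ on $\overline{X}$, and a normal part, which is a complex linear Cauchy-Riemann operator on the trivializable complex rank-$1$ normal bundle of $\overline{X}$ in $\overline{M}$. The suitably restricted hypothesis gives surjectivity of the tangential operator. The normal operator has Fredholm index $0$, consistent with the observation (made just before the proposition) that the two moduli spaces have the same virtual dimension; since the curves in $\mathcal{M}_0$ have genus $0$ with all asymptotics at elliptic orbits, Wendl's automatic transversality applies to force the kernel and cokernel of the normal operator to vanish. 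Hence the inclusion $\mathcal{M}_0\hookrightarrow\mathcal{M}_J$ is a diffeomorphism of regular $0$-dimensional manifolds.

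Finally, Proposition~\ref{prop:mainprop} guarantees that $\mathcal{M}_0$ is nonempty (and finite, once the compactness result of the next section is available), and I would argue the signed count is nonzero. The expected mechanism is that automatic transversality together with the complex linear structure of the normal operator forces every curve in $\mathcal{M}_J$ to contribute with the same sign under any coherent orientation scheme, so that the signed count is a positive integer. The main obstacle here is precisely the sign/orientation comparison: one must verify that the coherent orientations on $\mathcal{M}_J$ inherited from the $6$-dimensional SFT formalism agree with those on $\mathcal{M}_0$ under the above identification, so that the positive contributions really do persist. Once this orientation bookkeeping is in place the oriented cobordism class of $\mathcal{M}_J(g_{n+1}\alpha_2,\beta_1^{g_{n+2}})$ is a nonzero element of $\Omega^{SO}_0(\mathrm{pt})=\mathbb{Z}$, completing the proof.
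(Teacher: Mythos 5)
Your proposal follows the same outline as the paper's proof: reduce $\mathcal{M}_J(g_{n+1}\alpha_2,\beta_1^{g_{n+2}})$ to $\mathcal{M}_0$ using $S^1$-invariance, regularity, and the $\mathcal{J}_R$ condition; split the normal bundle as $H \oplus V$; and appeal to Wendl's automatic transversality. However, you leave a genuine gap at the sign step, which you flag yourself as ``the main obstacle'' and then do not resolve. The framing is also slightly off: comparing the coherent orientation on $\mathcal{M}_J$ to that on $\mathcal{M}_0$ would not suffice, because Proposition~\ref{prop:mainprop} only gives that $\mathcal{M}_0$ is nonempty, not that its four-dimensional signed count is nonzero. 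Moreover, the assertion that ``the complex linear structure of the normal operator'' forces a constant sign is not justified as stated; the linearized Cauchy--Riemann operator along a curve generally carries a non-complex-linear zeroth-order term, so there is no canonical orientation of its determinant line available for free, and one must in any case relate whichever orientation one chooses to the coherent orientations entering the SFT-type count.

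What the paper actually does to close the gap is a crossing-number argument that your proposal does not supply. Given two curves $C, C' \in \mathcal{M}_0$, one identifies their normal bundles and chooses a one-parameter family $D(t)$ of Cauchy--Riemann type operators interpolating between their deformation operators, chosen to respect the splitting $D(t) = D(t)_H \oplus D(t)_V$. The sign difference between $C$ and $C'$ is a sum of crossing numbers, which can be nonzero only at parameter values $t_0$ where $D(t_0)$ has nontrivial cokernel. One then verifies that the normal Chern numbers of both $D(t_0)_H$ and $D(t_0)_V$ are negative (for $H$ by comparison with the deformation operator in $\overline{X}$ using the suitably-restricted hypothesis, and for $V$ by a parity computation showing the $\alpha_2 \times \{p\}$ asymptotics contribute as odd orbits), so Wendl's automatic transversality theorem forces trivial cokernel for every operator in the family. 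Hence there are no crossings, all curves in $\mathcal{M}_J$ count with the same sign, and since $\mathcal{M}_0 \neq \emptyset$ the total count is nonzero. Without some version of this interpolating-family comparison your argument does not actually establish that the oriented cobordism class is nontrivial.
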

We prove this in section \ref{3pt1}.

We will combine Proposition \ref{nontriv} with the following.
\begin{proposition} \label{jexist} The set of $J \in \overline{\mathcal{J}}_R$ which are regular for $\mathcal{M}_J(g_{n+1}\alpha,\beta_1^{g_{n+2}})$ and suitably restricted is nonempty.
\end{proposition}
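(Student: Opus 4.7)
The plan is to construct $J$ in two stages: first choose a generic almost complex structure on the four-dimensional slice $\overline{X}$ realizing the suitably restricted condition, then extend $S^1$-equivariantly to $\overline{M}$ and perturb within the $S^1$-invariant class to achieve regularity for the six-dimensional moduli space $\mathcal{M}_J(g_{n+1}\alpha_2,\beta_1^{g_{n+2}})$.

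By standard (McDuff-Salamon) transversality, a comeager set of cobordism admissible almost complex structures $J_X$ on $\overline{X}$ is regular for every somewhere injective finite-energy genus $0$ curve. Fix such a $J_X$ and extend it to an $S^1$-invariant $J \in \overline{\mathcal{J}}_R$ on $\overline{M}$; this extension exists because the $S^1$-action fixes $\overline{X}$ pointwise, so the restriction map from the space of $S^1$-invariant cobordism admissible structures on $\overline{M}$ to those on $\overline{X}$ is surjective. Any such $J$ is automatically suitably restricted, so the remaining task is to perturb $J$ within the $S^1$-invariant class, keeping its restriction to $\overline{X}$ fixed, so that $J$ becomes regular for $\mathcal{M}_J(g_{n+1}\alpha_2, \beta_1^{g_{n+2}})$.

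For a curve $C \in \mathcal{M}_J$ contained in the fixed locus $\overline{X}$, the linearized Cauchy-Riemann operator $D_C$ on $C^*T\overline{M}$ splits $S^1$-equivariantly as $D_C^{\op{tan}} \oplus D_C^{\op{nor}}$, with $D_C^{\op{tan}}$ the four-dimensional operator (surjective by the suitably restricted condition) and $D_C^{\op{nor}}$ a Cauchy-Riemann operator on the trivial complex line bundle spanned by the $z_3$-coordinate, whose asymptotic operators at the punctures are determined by the $S^1$ rotation numbers near $\alpha_2 \times \{p\}$ and along $\beta_1^{g_{n+2}}$. A direct calculation of the Fredholm index of $D_C^{\op{nor}}$ shows it equals the expected normal contribution to the virtual dimension, and Wendl's automatic transversality criterion for genus zero curves applies to give surjectivity of $D_C^{\op{nor}}$; hence $D_C$ is surjective.

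For a curve $C$ not entirely contained in $\overline{X}$, the $S^1$-action is free at some point of $C$, so the $S^1$-orbit $S^1\cdot C$ is a one-parameter family of distinct curves in $\mathcal{M}_J$. Since the virtual dimension is zero, a standard Sard-Smale argument applied to a universal moduli space in which $J$ is perturbed $S^1$-equivariantly only outside a neighborhood of $\overline{X}$ (preserving the suitably restricted condition) rules out such curves for $J$ in a comeager subset of $\overline{\mathcal{J}}_R$; equivariance poses no obstruction because evaluation at a point with free $S^1$-action provides the requisite directions of perturbation. The main obstacle will be the automatic transversality verification for $D_C^{\op{nor}}$: this reduces to a Conley-Zehnder inequality in the $z_3$-direction, but correctly computing the rotation numbers (with the appropriate convention dictated by the stable Hamiltonian structure on $\partial E(c,c+\epsilon) \times \mathbb{C}P^1(2T)$) and matching them against Wendl's criterion at both the $g_{n+1}$ positive punctures and the single negative puncture is the delicate step, analogous to the corresponding computation in \cite[\S 3]{hk}.
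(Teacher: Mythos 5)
Your two-stage strategy (regularity on $\overline{X}$ followed by $S^1$-equivariant extension, with automatic transversality handling curves in the fixed locus) tracks part of what happens in the paper, but it omits the heart of the paper's argument and the omitted part is not a formality.

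The gap is in your treatment of curves $C$ not contained in $\overline{X}$. You reduce to a Sard--Smale argument by asserting that ``evaluation at a point with free $S^1$-action provides the requisite directions of perturbation.'' This is exactly the argument that works for \emph{orbitally simple} curves, i.e.\ somewhere injective curves that meet some free $S^1$-orbit exactly once and transversally: you can then perturb $J$ equivariantly in a neighborhood of that orbit and the perturbation affects $C$ only near that single point, which is what makes the universal linearization surjective. But a somewhere injective curve not contained in $\overline{X}$ need not be orbitally simple -- it can meet every free $S^1$-orbit it touches with multiplicity $\ge 2$ (think of the local model $z \mapsto (z^2, z)$ with $S^1$ rotating the second coordinate). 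For such a curve an equivariant perturbation of $J$ near a free orbit hits $C$ at several points simultaneously, and the standard Floer--Hofer--Salamon argument does not yield the surjectivity needed for Sard--Smale. Your claim that ``the $S^1$-orbit $S^1\cdot C$ is a one-parameter family of distinct curves'' is also not quite the right dichotomy: it is true once there are at least three punctures (since $\operatorname{Aut}$ of the domain is then finite), but distinctness of the $S^1$-translates does not by itself give the directions of perturbation required.

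The paper's proof is built precisely around this issue. It first achieves regularity generically only for \emph{orbitally simple} curves among $S^1$-invariant $J$ (together with the suitably restricted condition), and then proves a separate proposition: after stretching the neck along the stable Hamiltonian hypersurface $\Sigma = \Phi(\partial(1+\delta)E(1,b_n+\epsilon)) \times \mathbb{C}P^1(2T)$, for $K$ large every curve in $\mathcal{M}_{J^K}(g_{n+1}\alpha_2,\beta_1^{g_{n+2}})$ is orbitally simple. That proposition is proved by contradiction via SFT compactness: a non-orbitally-simple $u_K$ would, in the limit, produce a limiting piece in $\overline{B}$ whose projection $\pi\circ v$ is a nontrivial multiple cover of some $w$; an action estimate (everything has area $O(\epsilon)$) forces a Diophantine relation among the ends, and coprimality of $g_n, g_{n+1}, g_{n+2}$ then forces $w$ to already have $g_{n+1}$ simple positive ends, contradicting the nontriviality of the cover. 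Your proposal contains no analogue of this neck-stretching/Fibonacci-coprimality step, and without it the Sard--Smale argument for curves outside $\overline{X}$ does not close. The automatic-transversality part of your write-up (splitting $D_C = D_C^{\op{tan}} \oplus D_C^{\op{nor}}$, applying Wendl in the normal direction) is essentially correct and matches what the paper does for the fixed-locus curves, but that piece is not where the difficulty lies.
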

This is established in section \ref{3pt2}.

\begin{subsection}{The moduli space for invariant almost-complex structures.}\label{3pt1}

Here we prove Proposition \ref{nontriv} (modulo the compactness of the moduli space, which is deferred to \S\ref{secn4}).

\begin{proof}

The proof is similar to \cite[Prop. 3.15]{hk}.  The key will be a version of automatic transversality in this setting established by Wendl in \cite{wendl}.

{\em Step 1.}{\em Splitting the normal bundle.}

Let $C$ be a curve in $\mathcal{M}_{J}(g_{n+1}\alpha_2,\beta_1^{g_{n+2}})$.  Then $C$ is index $0$ (we will prove this in Lemma~\ref{mainspace}), somewhere injective, and transverse, hence by \cite[Cor. 3.17]{wendl} immersed, since $J$ is regular.  Also, since $C$ is transverse, its projection to $\mathbb{C}P^1$ lives in the fixed point set (otherwise, it would not be rigid), and thus its projection is $p$.  Hence $C \in \mathcal{M}_0$. Now let $N$ denote the normal bundle to $C$.  A linear Cauchy-Riemann type operator is a map:
\[ D: \Gamma(N) \to \Gamma(T^{0,1} C \otimes N).\]

We first claim that the bundle $N$ splits as a sum of complex line bundles.
\[ N = H \oplus V.\]
Here, $H$ and $V$ are defined as follows.  First, note that $C$ is a symplectic submanifold of $\overline{M}$.  We can therefore identify its normal bundle with a subbundle of $T\overline{M}|_C$.  Now, a point $y$ on $C$ either maps to the complement of the image of $E(1,b_n+\epsilon,S)$, or to the cylindrical end $\partial E(1,b_n+\epsilon,S) \times (-\infty,0].$  In the first case, we can write $T\overline{M}|_y = T_{\pi(y)}(\overline{X}) \times T_p(\mathbb{C}P^1)$, and in the second case we can write $T\overline{M}_y = \mathbb{C}^3$; here $\pi$ denotes the canonical projection, and we are thinking of $\partial E(1,b_n+\epsilon,S) \times (-\infty,0]$ as identified with the complement of the origin in $E(1,b_n+\epsilon,S)$.  We define $V$ to be the subbundle that is parallel to the $T_p(\mathbb{C}P^1)$ factor in the first case, and the $\lbrace z_3 \rbrace$ factor in the second, and we define $H$ to be the subbundle that is parallel to the $T_{\pi(y)}(\overline{X})$ factor in the first case, and the $\lbrace z_1,z_2 \rbrace$ factors in the second.  Note that this is well-defined.

The argument in \cite[Lem. 3.17]{hk} now says that this is in fact a $J$-holomorphic splitting of complex subbundles.  In that argument, the map $\phi_0$ is induced from an inclusion, but it generalizes to this case without change: all we need is that the map is induced by an embedding which restricts to the $3^{rd}$ coordinate as the identity.

{\em Step 2.}{\em Counting with sign.}

By Proposition \ref{prop:mainprop}, there is at least one element $C$ in the moduli space  $\mathcal{M}_0 \subset \mathcal{M}_{J}(g_{n+1}\alpha_2,\beta_1^{g_{n+2}})$.  If there are no other curves then the Proposition follows immediately from regularity of $J$.  Assume then that there is some other curve $C'$.  We claim that $C'$ counts with the same sign as $C$.  To compute the difference in sign between $C$ and $C'$, we identify their normal bundles, and choose a family of linear Cauchy-Riemann type operators interpolating between their deformation operators, with the same asymptotics.  As in \cite[\S 3.3.1]{hk}, the difference in sign is then given by computing a sum of crossing numbers; these crossing numbers are computed at parameter values where the relevant Cauchy-Riemann type operator has a nontrivial cokernel.

Now note that by Step $1$, the deformaton operator for either $C$ or $C'$ splits as a sum with respect to this splitting:
\[ D=\left( \begin{array}{ccc}
D_H & 0\\
0 & D_V \\
\end{array} \right).\]
We can choose our interpolating family $D(t)$ to respect this splitting.  Now let $D(t_0)$ be some operator in this family.  Then we claim that the cokernel of $D(t_0)$ is trivial.  This is because the normal Chern numbers of the operators $D(t_0)_H$ and $D(t_0)_V$ are negative, so we can appeal to \cite[Thm. 1.2]{wendl}.  To see why they are negative, note first that we can identify the bundle $H$ with the normal bundle to $C$ in $X$, and by the conditions on $J$, we can choose this identification such that the operator $D(t_0)_H$ has the same asymptotics as the deformation operator for $C$ in $X$.

As for the operator $D(t_0)_V$, note first that the stable Hamiltonian structure on $\partial E(c,c+\epsilon) \times \mathbb{C}P^1$ restricts to a stable Hamiltonian structure on $\alpha_2 \times  \mathbb{C}P^1$.  The operator $D(t_0)_V$ is asymptotic at any positive puncture $q_i$ at a Reeb orbit $\alpha_2 \times \lbrace p \rbrace$ to the asymptotic operator on $T_p(\mathbb{C}P^1)|_{\alpha_2 \times \lbrace p \rbrace}$ induced by the Reeb flow for this stable Hamiltonian structure.  We claim that the orbit at $q_i$ has odd parity, in the sense of \cite[Sec. 3.2]{wendl}; given this, the claimed fact about the normal Chern number will follow.  To see why the parity is odd, note that we can choose a trivialization for $T_p(\mathbb{C}P^1)$ such that this asymptotic operator is $i\partial_t$, where $t$ is the angular coordinate near the puncture, so if we perturb this operator by adding a constant, the perturbed operator will have odd Conley-Zehnder index.  By the definitions in \cite[Sec. 3.2]{wendl}, this says that these orbits count as odd in their contribution to the normal Chern number.  Thus, the operator $D(t_0)_V$ cannot have a nontrivial cokernel either, so we are done.

\end{proof}
\end{subsection}

\begin{subsection}{Regular and invariant structures exist.}\label{3pt2}

Here we prove Proposition \ref{jexist}, that is, we establish the existence of suitably restricted almost-complex structures $J \in \overline{\mathcal{J}}_R$ that are regular for curves in $\mathcal{M}_J(g_{n+1}\alpha_2,\beta_1^{g_{n+2}}).$

We follow the methods of \cite{hk} closely, and the first observation is that standard transversality arguments imply the existence of suitably restricted almost-complex structures $J \in \overline{\mathcal{J}}_R$ which are regular for curves in $\mathcal{M}_{J}(g_{n+1}\alpha_2,\beta_1^{g_{n+2}})$ which are {\em orbitally simple}, that is, curves which intersect at least one orbit of the $S^1$ action exactly once and transversally, see \cite{hk}, section $3.3.2$.

We may suppose that our embedding $\Phi$ extends to a slightly larger ellipsoid $(1+\delta)E(1,b_n+\epsilon)$. Denote by $$\Sigma = \Phi(\partial (1+\delta)E(1,b_n+\epsilon)) \times \C P^1(2T).$$  This is a stable Hamiltonian hypersurface in $\overline{M}$. Furthermore, the same transversality arguments allow us to find suitably restricted $J^K \in \overline{\mathcal{J}}_R$ which are regular for orbitally simple curves and also satisfy the following conditions:

$\bullet$ The almost-complex structure $J^K$ is stretched to length $K$ along $\Sigma$;

$\bullet$ Away from $\Sigma$ the $J^K$ converge smoothly to a fixed almost-complex structure;

$\bullet$ On $(\overline{E(c,c+\epsilon)} \setminus \Phi((1+\delta)E(1,b_n+\epsilon))) \times \mathbb{C}P^1(2T)$ the natural projection $\pi:\overline{M} \to \overline{X}$ is $J^K$-holomorphic.

Proposition \ref{jexist} now follows from the next proposition.

\begin{proposition} For $K$ sufficiently large, all curves in $\mathcal{M}_{J^K}(g_{n+1}\alpha_2,\beta_1^{g_{n+2}})$ are orbitally simple.
\end{proposition}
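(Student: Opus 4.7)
The plan is to argue by contradiction via SFT compactness in the neck-stretched setting, following closely the template of \cite[\S 3.3.2]{hk}. Suppose there exist $K_i \to \infty$ and curves $C_i \in \mathcal{M}_{J^{K_i}}(g_{n+1}\alpha_2, \beta_1^{g_{n+2}})$ each failing to be orbitally simple. The actions $\mathcal{A}(C_i)$ are uniformly bounded by $(c+\epsilon)g_{n+1} - g_{n+2}$, so after passing to a subsequence the $C_i$ SFT-converge to a holomorphic building $\mathcal{F}$ split along $\Sigma$ as well as along the outer contact boundaries. The levels of $\mathcal{F}$ are distributed across three regions: the upper region $U$ between $\Sigma$ and the cylindrical end of $\partial E(c,c+\epsilon)$; the symplectization $\R \times \Sigma$; and the lower region $L$ inside $\Phi((1+\delta)E(1,b_n+\epsilon))$ together with its cylindrical end.

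The first substantive step is to identify the upper level of $\mathcal{F}$. Since $\pi:\overline{M}\to\overline{X}$ is $J^K$-holomorphic on $U$ by construction, the projected image $\pi(\mathcal{F}|_U)$ is a (possibly broken) holomorphic curve in the $4$-dimensional cobordism $\overline{X}_\delta$ between $\partial E(c,c+\epsilon)$ and $\partial((1+\delta)E(1,b_n+\epsilon))$, with total positive asymptotics covering $\alpha_2$ with multiplicity $g_{n+1}$. Applying Proposition \ref{prop:mainprop} and the ECH argument of Section \ref{secn2} to $\overline{X}_\delta$ (the relevant ECH gradings, actions, and indices depend continuously on $\delta$, so the conclusion remains valid for $\delta$ small), one identifies the underlying simple curve of $\pi(\mathcal{F}|_U)$ with the embedded genus-zero curve $C'_\delta$ produced there, with $g_{n+1}$ positive ends at $\alpha_2$ and a single negative end at the appropriate analogue of $\beta_1^{g_{n+2}}$ on $\partial((1+\delta)E(1,b_n+\epsilon))$. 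Using the normal bundle splitting from Step 1 of the proof of Proposition \ref{nontriv} together with $S^1$-equivariance of $J^K$ on $U$, the upper component of $\mathcal{F}$ is an $S^1$-equivariant cover of the section $C'_\delta \times \{p\}$; matching positive-end multiplicities forces this cover to be the section itself, so the upper level lies entirely in the $S^1$-fixed locus.

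The main obstacle is then to propagate this rigidity through the symplectization and lower levels of $\mathcal{F}$. The matching asymptotics on $\Sigma$ are $S^1$-invariant Reeb orbits contained in the fixed locus. Using automatic transversality \cite[Thm. 1.2]{wendl} together with an index/action bookkeeping analogous to Claim \ref{clm:mainclaim}, one argues that any putative symplectization or lower-level component whose image strays off the fixed locus would have strictly negative normal Chern number, and therefore either violates transversality or is forced by positivity of the ECH index to be a branched cover of a fixed-locus trivial cylinder. Consequently every level of $\mathcal{F}$ is supported in the $S^1$-fixed locus: the symplectization levels are branched covers of $S^1$-invariant trivial cylinders, and the lower level is the trivial lift of a curve in $\mathcal{M}_0 \subset \overline{X}$. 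In particular $\mathcal{F}$ is the embedded fixed-locus lift of a curve in $\mathcal{M}_0$.

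Finally, SFT convergence $C_i \to \mathcal{F}$ places each $C_i$ in an arbitrarily small $C^\infty$-neighborhood of $\mathcal{F}$ for $i$ large. Since a sufficiently $C^\infty$-small perturbation of an embedded curve lying in the fixed locus intersects a generic $S^1$-orbit through a nearby point transversely in exactly one point (as in the analogous step of \cite[\S 3.3.2]{hk}), the $C_i$ are orbitally simple for $i$ large, contradicting the choice of $C_i$ and completing the proof.
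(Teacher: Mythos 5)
Your proof takes a substantially different and ultimately unjustified route. The paper's argument is much more direct: it observes that if $u_K$ fails to be orbitally simple, then by construction (since the projection $\pi:\overline{M}\to\overline{X}$ is $J^K$-holomorphic on the region $\overline{B}=\big(E(c,c+\epsilon)\setminus\Phi((1+\delta)E(1,b_n+\epsilon))\big)\times\C P^1(2T)$) the restricted projection $\pi\circ u_K|_{u_K^{-1}(\overline{B})}$ is a nontrivial multiple cover onto its image. This covering property passes to the SFT limit $v$ in $\overline{B}$, so $\pi\circ v$ is a $d$-fold cover of a simple curve $w$ with $d\ge 2$. Because the $u_K$ have action $O(\epsilon)$, so does $w$; writing the area of $w$ in terms of its asymptotics forces $l=\frac{g_{n+2}}{g_ng_{n+1}}(kg_n-mg_{n+1})\in\Z$, and the coprimality of consecutive odd-index Fibonacci numbers forces $g_{n+1}\mid k$, i.e.\ $w$ already has at least $g_{n+1}$ positive ends. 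But $v$ inherits at most $g_{n+1}$ positive ends (simply covered) from the $u_K$, so it cannot be a nontrivial cover of a curve with $\ge g_{n+1}$ ends. That is the whole argument; nothing about the fixed locus, automatic transversality, or the lower/symplectization levels of the building is needed.

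The central gap in your proposal is the step where you ``identify the underlying simple curve of $\pi(\mathcal{F}|_U)$ with the embedded genus-zero curve $C'_\delta$'' by appeal to Proposition~\ref{prop:mainprop}. That proposition is an \emph{existence} statement produced by the ECH cobordism map: it says there is at least one index-$0$ curve with the desired asymptotics. It gives no uniqueness and says nothing about the limit of an arbitrary sequence of non-orbitally-simple curves $C_i$, which need not bear any relation to the ECH count. Without this identification, the entire subsequent ``propagate rigidity into the fixed locus'' step collapses. That step is also dimensionally confused: the ECH index and the notion of branched covers of trivial cylinders being index-$0$ are specifically four-dimensional tools, and the symplectization levels of your building live in $2N$-dimensional cylindrical manifolds where neither Hutchings' index inequality nor Wendl's automatic transversality (which is also a $4$-dimensional theorem) applies as stated. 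Finally, even granting that $\mathcal{F}$ lies in the fixed locus, the concluding claim that a $C^\infty$-small deformation of a fixed-locus curve is automatically orbitally simple is asserted rather than proved; such a deformation could \emph{a priori} wind around the fixed locus or be tangent to $S^1$-orbits. The paper sidesteps all of these issues by never attempting to pin down the limit building beyond the elementary degree-and-area count above.
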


\begin{proof}
The proof is analogous to \cite{hk}, Proposition $3.18$. We argue by contradiction and suppose $u_K$ is a curve in $\mathcal{M}_{J^K}(g_{n+1}\alpha_2,\beta_1^{g_{n+2}})$ which is not orbitally simple.

Taking a limit as $K \to \infty$ the compactness theorem of Symplectic Field Theory \cite{behwz} implies that a subsequence of the $u_K$ converge to a holomorphic building with components in the completion $\overline{A}$ of $\Phi((1+\delta)E(1,b_n+\epsilon) \times \mathbb{C}P^1(2T) \setminus \tilde{\Psi}(E(1,b_n+\epsilon,S))$, the completion $\overline{B}$ of $(E(c,c+\epsilon) \setminus \Phi((1+\delta)E(1,b_n+\epsilon))) \times \mathbb{C} P^1(2T)$ and possibly the symplectizations of $\partial \tilde{\Psi}(E(1,b_n+\epsilon,S))$ and $\Sigma$ and $\partial E(c,c+\epsilon) \times \mathbb{C}P^1(2T)$.

Let $S_K = u_K^{-1}(\overline{B})$. By definition $\pi \circ u_K|_{S_K}$ is a nontrivial multiple covering onto its image. The degree is constant on each component and by the asymptotic behaviour of the $u_K$ near their positive punctures we see that the degree is bounded by $g_{n+1}$. The convergence implies that for the limiting curves $v$ mapping to $\overline{B}$ the projection $\pi \circ v$ is also a nontrivial multiple cover.

Suppose that $\pi \circ v$ is a multiple cover of a finite energy curve $w$ mapping to the completion of $E(c,c+\epsilon) \setminus \Phi((1+\delta)E(1,b_n+\epsilon))$. Counting with multiplicity suppose that $w$ has $k$ positive ends, $l$ negative ends asymptotic to $\beta_1$ and $m$ negative ends asymptotic to multiples of $\beta_2$. Then up to terms of order $\epsilon, \delta$ the curve $w$ has symplectic area $$k\frac{g_{n+2}}{g_{n+1}} - l - m\frac{g_{n+2}}{g_n}.$$ As the curves $u_K$ have action of order $\epsilon$, so does $w$ and therefore the expression above is $0$. Hence $$\frac{g_{n+2}}{g_n g_{n+1}}(kg_n - mg_{n+1}) \in \Z.$$ Now, consecutive odd index Fibonacci numbers are coprime. Therefore $g_{n+1} | (kg_n - mg_{n+1})$ and $k$ is a multiple of $g_{n+1}$. But as $v$ is a limit of the $u_K$, by area considerations it can have at most $g_{n+1}$ positive ends, and if it does have $g_{n+1}$ positive ends they must be simply covered. As we have seen that $w$ must also have $g_{n+1}$ positive ends this contradicts our assumption that $v$ covers $w$ nontrivially and completes the proof.

\end{proof}

\end{subsection}

\end{section}

\begin{section}{Compactness}\label{secn4}

Continue to consider the manifolds $\overline{M}$ and $\overline{X}$ from the previous section, only we now allow any $N \ge 3$ as a parameter in their construction, rather than restricting to the case $N=3$ as we did in that section. Hence now $\overline{M}$ is a completion of $E(c,c+\epsilon) \times \mathbb{C}P^1(2T)^{N-2}$ with the image of an embedding $\tilde{\Psi}$ removed, where  \[ \tilde{\Psi}:E(1,b_n+\epsilon,S, \dots ,S) \to \op{int}(E(c,c+\epsilon)) \times \mathbb{C}P^1(2T)^{N-2}.\] Similarly to section \ref{secn3} we choose coordinates on $\mathbb{C}P^1(2T)^{N-2}$ such that the fixed point of the $(S^1)^{N-2}$ torus action is $z_3 = \dots = z_N =0$. By choosing $T$ sufficiently large, by Lemma \ref{lem:jexist} we need only consider curves whose projection onto $\mathbb{C}P^1(2T)^{N-2}$ lies in the affine part $\mathbb{C}^{N-2}$.

Let $\alpha_i$ and $\beta_i$ be the embedded closed Reeb orbits on $\partial E(c, c+\epsilon)$ and $\partial E(1,b_n + \epsilon)$ as in the previous sections, and continue to denote an $r$-fold cover of a simple Reeb orbit $\gamma$ by $\gamma^r$.
We denote by $\alpha_{i,w}$ the Reeb orbit $\alpha_i \times \{w\} \subset \partial E(c,c+\epsilon) \times \mathbb{C}P^1(2T)^{N-2}$ and $\beta_i$ will also denote a Reeb orbit in $\partial E(1,b_n + \epsilon,S, \dots ,S)$ using the inclusion $E(1,b_n + \epsilon) \subset E(1,b_n + \epsilon,S, \dots ,S)$.
Fix $J$, an almost complex structure which is compatible with the symplectic form, and having cylindrical ends.

Define $${\cal M}(J) = {\cal M}(\alpha_1^{r_1}, \dots ,\alpha_1^{r_{n_1}}, \alpha_2^{s_1}, \dots ,\alpha_2^{s_{n_2}}; \beta_1^{t_1}, \dots ,\beta_1^{t_{n_3}}, \beta_2^{u_1}, \dots ,\beta_2^{u_{n_4}};J)$$ to be a certain moduli space of $J$-holomorphic spheres in $\overline{M}$ with $n_1 + n_2 + n_3 + n_4$ punctures, quotiented by reparameterizations of the domain. Specifically, require curves $u \in {\cal M}(J)$ to have $n_1$ positive punctures asymptotic to covers of some $\alpha_{1,w}$, with the $i^{th}$ one covering the simple orbit $r_i$ times. Similarly there must be $n_2$ positive punctures asymptotic to covers of the $\alpha_{2,w}$ and so on.

The goal of this section is to show that for the relevant values of the $r_i, s_i, t_i$ and $u_i$, the moduli space ${\cal M}(J)$ is sequentially compact, as is a related moduli space associated to $1$-parameter families of almost-complex structures, see Theorem \ref{compact}. To do this, we first need formulas for the virtual index of holomorphic curves in various cobordisms.  The index formula for holomorphic curves in symplectic cobordisms can be found for example in \cite{egh}; the formulas in the case of ellipsoids were worked out in \cite{hk}.

\begin{proposition}\label{middleindex} For $S$ sufficiently large and $\epsilon$ small, the virtual deformation index of curves $u \in {\cal M}(J)$ is given by
\begin{eqnarray*}
\mathrm{index}(u)  =  (N-3)(2-n_1 - n_2 - n_3 - n_4) \\
 +  \sum_{i=1}^{n_1}(2r_i + 2\lfloor \frac{r_ic}{c+\epsilon} \rfloor + N-1)
 +  \sum_{i=1}^{n_2}(2s_i + 2\lfloor \frac{s_i(c+\epsilon)}{c} \rfloor + N-1) \\
 -  \sum_{i=1}^{n_3}(2t_i + 2\lfloor \frac{t_i}{b_n+\epsilon} \rfloor + N-1)
 - \sum_{i=1}^{n_4}(2u_i + 2\lfloor
 u_i(b_n+\epsilon) \rfloor + N-1) \\
 =  2(N-3) +2n_2 -(2N-4)n_3 - (2N-4)n_4 + 4\sum_{i=1}^{n_1}r_i  + 4\sum_{i=1}^{n_2}s_i  \\
- 2\sum_{i=1}^{n_3}(t_i + \lfloor \frac{t_i}{b_n+\epsilon} \rfloor) - 2\sum_{i=1}^{n_4}(u_i + \lfloor u_i(b_n+\epsilon) \rfloor)
\end{eqnarray*}
\end{proposition}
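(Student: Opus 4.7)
The plan is to invoke the general virtual index formula for punctured pseudoholomorphic curves in a $2N$-dimensional symplectic cobordism from \cite{egh}: for a curve $u$ with domain $\Sigma$, a trivialization $\tau$ of $\xi$ along each asymptotic orbit, and positive/negative asymptotic orbits $\gamma_i^{\pm}$,
$$\mathrm{index}(u) = (N-3)\chi(\Sigma) + 2c_\tau(u^*T\overline{M}) + \sum_i \mathrm{CZ}_\tau(\gamma_i^+) - \sum_j \mathrm{CZ}_\tau(\gamma_j^-).$$
Since $\Sigma$ is a sphere with $n_1+n_2+n_3+n_4$ punctures, we have $\chi(\Sigma) = 2 - n_1 - n_2 - n_3 - n_4$, giving the first summand of the formula in the proposition. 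I would take $\tau$ as in \cite{hk} so that the relative Chern term either vanishes or is absorbed into the Conley-Zehnder contributions.

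The main work lies in the Conley-Zehnder computation. For the orbits $\beta_1^t$ and $\beta_2^u$ on $\partial E(1, b_n+\epsilon, S,\dots,S)$, the standard $N$-dimensional ellipsoid formula gives
$$\mathrm{CZ}(\beta_1^t) = (N-1) + 2t + 2\lfloor t/(b_n+\epsilon)\rfloor + 2(N-2)\lfloor t/S\rfloor,$$
and analogously for $\beta_2^u$ with the axis ratios inverted. The hypothesis that $S$ is sufficiently large is used precisely here: once $S$ exceeds every $t_i$ and every $u_i(b_n+\epsilon)$, all floor terms involving $S$ vanish, leaving the $\beta$-contributions written in the proposition. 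For the orbits $\alpha_{i,w}$ sitting at the fixed point of the $S^1$-action on $\mathbb{C}P^1(2T)^{N-2}$, the stable Hamiltonian structure is Morse-Bott in the transverse $\mathbb{C}P^1$ factors; perturbing by a Morse function with $w$ as a local minimum contributes $+1$ per $\mathbb{C}P^1$ factor per iterate, producing $(N-1) + 2r + 2\lfloor rc/(c+\epsilon)\rfloor$ and $(N-1) + 2s + 2\lfloor s(c+\epsilon)/c\rfloor$ for $\alpha_1^r$ and $\alpha_2^s$ respectively. Summing the four asymptotic contributions with the Euler characteristic prefactor yields the first displayed expression.

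The second equality is then a routine simplification using the hypothesis that $\epsilon$ is small: one has $\lfloor rc/(c+\epsilon)\rfloor = r-1$ and $\lfloor s(c+\epsilon)/c\rfloor = s$ whenever $r$ and $s$ do not exceed $c/\epsilon$, which holds for all multiplicities relevant to our moduli problem once $\epsilon$ is chosen small enough. Substituting these into the $\alpha$-contributions and collecting constants produces the second formula. I do not expect any substantive obstacle, as the result is essentially a bookkeeping exercise; the only delicate point worth checking carefully is the Morse-Bott perturbation contribution from the $\mathbb{C}P^1$ factors, which is handled analogously to the corresponding step in \cite{hk}.
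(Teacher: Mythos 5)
Your proposal matches the approach the paper points to: the paper itself gives no proof of Proposition \ref{middleindex}, simply citing the general punctured-curve index formula (as in \cite{egh}) and the ellipsoid Conley-Zehnder computations as worked out in \cite{hk}, which is exactly the combination you carry out. One small slip worth flagging: the Morse-Bott contribution from each $\mathbb{C}P^1$ factor at an $\alpha$-end is $+1$ per factor per \emph{end}, not ``per iterate'' -- the transverse linearized Reeb flow in those directions is the identity for every cover, so the Robbin-Salamon contribution there is zero and the $+\tfrac{1}{2}\dim \mathbb{C}P^1 = 1$ correction is independent of the covering multiplicity; since the formula you actually write down is $(N-1)+2r+2\lfloor rc/(c+\epsilon)\rfloor$ with the $(N-2)$ appearing only in the constant, this is evidently just a wording slip and the computation is otherwise correct.
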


We note that this index is always even.  Here is an immediate application of Proposition~\ref{middleindex} that we will need:

\begin{lemma}\label{mainspace} Let ${\cal M}(J) = {\cal M}(\alpha_2, \dots ,\alpha_2, \beta_1^{g_{n+2}}; J)$ with $g_{n+1}$ copies of $\alpha_2$, and let $u \in \mathcal{M}(J)$.  Then $\mathrm{index}(u)=0$.
\end{lemma}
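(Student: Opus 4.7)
The plan is to apply the index formula from Proposition~\ref{middleindex} with the specific values of the parameters dictated by the moduli space $\mathcal{M}(\alpha_2,\dots,\alpha_2,\beta_1^{g_{n+2}};J)$, and then simplify using an elementary Fibonacci identity. There is really only one calculation to do.

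First I would read off the parameters: we have no positive punctures on $\alpha_1$, so $n_1=0$; there are $n_2 = g_{n+1}$ positive punctures on $\alpha_2$, each simply covered, so $s_i = 1$ for $1 \le i \le g_{n+1}$; there is a single negative puncture, $n_3 = 1$, with $t_1 = g_{n+2}$; and there are no negative punctures on $\beta_2$, so $n_4 = 0$. The only floor appearing nontrivially in the second form of the formula is $\lfloor g_{n+2}/(b_n+\epsilon)\rfloor$. Since $b_n = g_{n+2}/g_n$, we have
\[ \frac{g_{n+2}}{b_n+\epsilon} \;=\; \frac{g_n g_{n+2}}{g_{n+2}+g_n\epsilon}, \]
which is strictly less than $g_n$ and arbitrarily close to it when $\epsilon>0$ is small, so the floor equals $g_n - 1$. (This is exactly where I would use the hypothesis that $\epsilon$ is small.)

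Next I would substitute directly into the closed form of the index. The $\alpha_1$ and $\beta_2$ sums are empty, each $s_i = 1$ contributes $4$ to $4\sum s_i$, and the single $\beta_1$-term contributes $2(g_{n+2} + g_n - 1)$. Collecting,
\[ \mathrm{index}(u) \;=\; 2(N-3) + 2g_{n+1} - (2N-4) + 4g_{n+1} - 2\bigl(g_{n+2} + g_n - 1\bigr). \]
The terms involving $N$ cancel (as they must, the curve being essentially four-dimensional in character), and the constants $-6+4+2$ cancel as well, leaving
\[ \mathrm{index}(u) \;=\; 6g_{n+1} - 2(g_{n+2}+g_n). \]

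Finally I would invoke the identity $g_{n+2} + g_n = 3g_{n+1}$ for odd-indexed Fibonacci numbers. This follows by induction from the standard recurrence $F_{k+2} = F_{k+1} + F_k$ applied twice (equivalently, from $g_{k+1} = 3g_k - g_{k-1}$, which one checks on the initial values $g_0 = g_1 = 1$, $g_2 = 2$). Substituting gives $\mathrm{index}(u) = 6g_{n+1} - 6g_{n+1} = 0$. There is no real obstacle to this argument; the only thing to be careful about is the epsilon-dependence of the floor, which is unambiguous because $g_n g_{n+2}/(g_{n+2}+g_n\epsilon)$ is strictly less than the integer $g_n$.
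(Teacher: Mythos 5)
Your proof is correct and follows essentially the same route as the paper's: substitute the relevant parameters into Proposition~\ref{middleindex}, compute $\lfloor g_{n+2}/(b_n+\epsilon)\rfloor = g_n - 1$ for small $\epsilon$, and reduce to the Fibonacci identity $g_{n+2}+g_n = 3g_{n+1}$. The only difference is that you spell out the floor computation and the cancellation of the $N$-dependent terms in more detail than the paper does.
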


\begin{proof} By Proposition~\ref{middleindex}, we have
\begin{eqnarray*}
\mathrm{index}(u) = 2(N-3) + 2g_{n+1} -(2N-4) + 4g_{n+1}
- 2(g_{n+2} + \lfloor \frac{g_{n+2}}{b_n+\epsilon} \rfloor) \\
= -2 + 2g_{n+1} + 4g_{n+1} - 2(g_{n+2} + g_n-1) \\
= 2(3g_{n+1} - g_{n+2} - g_n) =0.
\end{eqnarray*}
In the last line, we have used a standard Fibonacci identity.

\end{proof}

There are similar moduli spaces of curves in the cylindrical manifolds $\partial E(c, c+\epsilon) \times (\mathbb{C}P^1)^{(N-2)}\times \R$ and $\partial E(1,b_n+\epsilon, S, \dots ,S) \times \R$ that we will want to study, where the almost complex structure is assumed $\mathbb{R}$-invariant.

In the first case we study moduli spaces $${\cal M}_{\op{ball}}(J) = {\cal M}(\alpha_1^{r_1}, \dots ,\alpha_1^{r_{n_1}}, \alpha_2^{s_1}, \dots ,\alpha_2^{s_{n_2}}; \alpha_1^{t_1}, \dots ,\alpha_1^{t_{n_3}}, \alpha_2^{u_1}, \dots ,\alpha_2^{u_{n_4}};J)$$ of curves in $\partial E(c, c+\epsilon) \times (\mathbb{C}P^1)^{(N-2)}\times \R$.  Note that as before, we are only requiring the ends lie on the Morse-Bott families corresponding to the $\alpha_i$.  The analogue of Proposition \ref{middleindex} in this case is the following.

\begin{proposition}\label{topindex} The virtual deformation index of curves $u \in {\cal M}_{\op{ball}}(J)$ is given by
\begin{eqnarray*}
\label{top}
\mathrm{index}(u) = (N-3)(2-n_1 - n_2 - n_3 - n_4) \\ + \sum_{i=1}^{n_1}(2r_i + 2\lfloor \frac{r_ic}{c+\epsilon} \rfloor + N-1) + \sum_{i=1}^{n_2}(2s_i + 2\lfloor \frac{s_i(c+\epsilon)}{c} \rfloor + N-1) \\
- \sum_{i=1}^{n_3}(2t_i + 2\lfloor \frac{t_ic}{c+\epsilon} \rfloor - N+3) - \sum_{i=1}^{n_4}(2u_i + 2\lfloor \frac{u_i(c+\epsilon)}{c} \rfloor -N+3) \\
= 2(N-3) +2n_2 +2n_3 + 4\sum_{i=1}^{n_1}r_i  + 4\sum_{i=1}^{n_2}s_i  \\
- 4\sum_{i=1}^{n_3}t_i  - 4\sum_{i=1}^{n_4}u_i.
\end{eqnarray*}
\end{proposition}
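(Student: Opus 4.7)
The plan is to apply the general Fredholm index formula from \cite{egh} for genus-$0$ punctured curves in a cylindrical symplectic manifold of real dimension $2N$:
\[ \mathrm{index}(u) = (N-3)(2-s) + 2c_1^\tau(u^*T) + \sum_+ CZ_\tau(\gamma_+) - \sum_- CZ_\tau(\gamma_-),\]
where $s = n_1+n_2+n_3+n_4$ is the total number of punctures and $\tau$ is a trivialization of the relevant bundles along the Morse-Bott-perturbed Reeb orbits. On the symplectization $\partial E(c,c+\epsilon)\times (\mathbb{C}P^1)^{N-2}\times \mathbb{R}$ the tangent bundle splits as a product and admits an obvious product trivialization for which the relative first Chern class vanishes, so the first task is to work out the perturbed Conley--Zehnder contribution at each end and then simplify. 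This is the same template that produces Proposition \ref{middleindex}; the only essential difference is that now both the positive and the negative ends land on Morse-Bott families in $\partial E(c,c+\epsilon)\times (\mathbb{C}P^1)^{N-2}$, rather than having the negative ends land on $\partial E(1,b_n+\epsilon,S,\dots,S)$.

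For the Conley--Zehnder contributions I would use the Morse-Bott perturbation of \cite{hk}: perturb each Morse-Bott torus $\alpha_i\times (\mathbb{C}P^1)^{N-2}$ by adding a Morse function on $(\mathbb{C}P^1)^{N-2}$ whose minimum is the torus fixed point $p$. At a positive puncture, standard Morse-Bott perturbation theory shifts the four-dimensional Conley--Zehnder index by $+(N-2)$; at a negative puncture the shift is $-(N-2)$. Plugging in the four-dimensional values for $\partial E(c,c+\epsilon)$ that were already used to derive Proposition \ref{middleindex}, one obtains a contribution $2r_i+2\lfloor r_ic/(c+\epsilon)\rfloor+(N-1)$ at each positive $\alpha_1^{r_i}$ end and $-(2t_i+2\lfloor t_ic/(c+\epsilon)\rfloor-(N-3))$ at each negative $\alpha_1^{t_i}$ end, with the analogous expressions at the $\alpha_2$ ends. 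Assembling these with the Euler-characteristic term yields the first displayed expression.

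The final step is simplification. For $\epsilon$ small one has $\lfloor r_ic/(c+\epsilon)\rfloor = r_i-1$, $\lfloor t_ic/(c+\epsilon)\rfloor = t_i-1$, $\lfloor s_i(c+\epsilon)/c\rfloor = s_i$, and $\lfloor u_i(c+\epsilon)/c\rfloor = u_i$. Substituting and collecting constants turns the raw expression into the compact form $2(N-3)+2n_2+2n_3+4\sum r_i+4\sum s_i-4\sum t_i-4\sum u_i$; the $n_1$ and $n_4$ coefficients cancel between the Euler-characteristic term, the perturbation constants, and the corrections coming from the floor identities, while the $n_2$ and $n_3$ coefficients do not. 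I expect the only real subtlety to be tracking the sign of the Morse-Bott perturbation at positive versus negative ends: that asymmetry is precisely what produces the different constants $+(N-1)$ and $-(N-3)$ in the two sums, and mis-handling it would give the wrong formula. Aside from that, the argument is a direct parallel of the derivation of Proposition \ref{middleindex} and introduces no new analytic ingredients.
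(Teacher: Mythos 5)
Your derivation is correct and takes the same route the paper takes implicitly: the paper states Proposition \ref{topindex} without a proof, pointing the reader to \cite{egh} for the general index formula and to \cite{hk} for the ellipsoid-specific computations, and your calculation faithfully reconstructs that chain — the Euler-characteristic term, vanishing relative first Chern class for a product trivialization, the $2(N-2)$-dimensional Morse--Bott correction, and the floor-function identities for small $\epsilon$. I verified the algebra in the final simplification and it matches ($n_1$ and $n_4$ coefficients cancel, $n_2$ and $n_3$ survive with coefficient $2$).

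One wording slip worth flagging: you attribute the asymmetric CZ shift of $+(N-2)$ at positive punctures and $-(N-2)$ at negative ones to a Morse function on $(\mathbb{C}P^1)^{N-2}$ with \emph{minimum} at $p$. Pinning both kinds of end to a minimum would shift both CZ indices by the \emph{same} amount, $-(N-2)$, and would not reproduce the formula. What actually produces the asymmetry is that ${\cal M}_{\op{ball}}(J)$ allows the ends to move freely in the Morse--Bott family (the paper's definition is ``asymptotic to covers of \emph{some} $\alpha_{1,w}$''), and with free ends the index formula effectively records the \emph{maximal} Morse index at positive ends and the \emph{minimal} one at negative ends, giving $\mathrm{RS}\pm \tfrac{1}{2}\dim$. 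Your numbers are right, so this is a matter of exposition rather than a gap, but the stated justification should be replaced by the free-end Morse--Bott convention (as the paper itself signals near the end of the proof of Theorem \ref{compact}). It would also be worth a one-line remark that $c_{1}^{\tau}=0$ relies on the projection to $(\mathbb{C}P^1)^{N-2}$ staying in the affine chart, which Lemma \ref{lem:jexist} and the action bounds guarantee for the curves of interest.
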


The following is an important application of Proposition~\ref{topindex}.

\begin{lemma}\label{topid} Let $u \in {\cal M}_{\op{ball}}(\alpha_2, \dots ,\alpha_2; \gamma; J)$; that is, let $u$ be a curve with positive ends simply covering $\alpha_2$ and a single negative end $\gamma$, which may be a cover.  Then $\mathrm{index}(u) \ge 2(N-2)+2(c-1)$, where $c$ is the covering degree of the end in the case when it covers $\alpha_2$ and is $0$ otherwise. Moreover, there is equality if and only if $u$ covers a cylinder over $\gamma = \alpha_2$.
\end{lemma}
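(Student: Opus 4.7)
The plan is to feed the asymptotic data directly into the index formula of Proposition~\ref{topindex} and split into two cases depending on whether $\gamma$ is a cover of $\alpha_1$ or of $\alpha_2$. Write $k$ for the number of positive ends, so $n_1 = 0$, $n_2 = k$, and all $s_i = 1$; the general formula then collapses to
\[ \op{index}(u) = 2(N-3) + 6k + 2n_3 - 4\sum_i t_i - 4\sum_i u_i.\]
If $\gamma = \alpha_1^t$, then $n_3 = 1$, $t_1 = t$, $n_4 = 0$, which yields $\op{index}(u) = 2N-4 + 6k - 4t$. If instead $\gamma = \alpha_2^d$, then $n_3 = 0$, $n_4 = 1$, $u_1 = d$, which yields $\op{index}(u) = 2N-6 + 6k - 4d$.

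The required bounds will come from positivity of symplectic area. The positive ends contribute total action $k(c+\epsilon)$ while the negative end contributes $tc$ or $d(c+\epsilon)$ respectively, and a $J$-holomorphic curve must have nonnegative action. In the first case this gives $t \le k(1 + \epsilon/c)$, which for $\epsilon$ sufficiently small and $t \in \Z_{>0}$ forces $t \le k$; in the second it gives at once $d \le k$. Substituting $t \le k$ in the first expression yields $\op{index}(u) \ge 2N - 4 + 2k \ge 2N - 2$, which is strictly larger than the required lower bound $2(N-2) + 2(0-1) = 2N - 6$. Substituting $d \le k$ in the second yields $\op{index}(u) \ge 2N - 6 + 2d = 2(N-2) + 2(d-1)$, with equality if and only if $k = d$.

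It remains to characterize equality. When $k = d$ in the second case the action of $u$ vanishes exactly, and the standard energy identity for holomorphic curves in symplectizations (see e.g.~\cite{behwz}) forces the image of $u$ to lie in $\R \times \alpha_2$, so that $u$ is a degree $d$ branched cover of the trivial cylinder over $\alpha_2$; the strict inequality in the first case rules out equality when $\gamma$ covers $\alpha_1$. The only real bookkeeping is confirming that integer-valued action positivity indeed forces $t \le k$ once $\epsilon$ is small, after which the argument is entirely mechanical given Proposition~\ref{topindex}.
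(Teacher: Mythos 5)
Your proof is correct and follows essentially the same route as the paper's: substitute the asymptotic data into Proposition~\ref{topindex}, split into the two cases $\gamma$ covering $\alpha_1$ or $\alpha_2$, use nonnegativity of action (taking $\epsilon$ small so that the integer bound $t \le k$ follows in the first case) to bound the covering degree of the negative end by the number of positive ends, and read off the equality case. The only difference is cosmetic — you spell out the $\epsilon$-smallness needed for the $\alpha_1$ case and the zero-energy reasoning for the cylinder conclusion, which the paper leaves implicit.
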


\begin{proof} First we suppose $\gamma = \alpha_1^r$ and $u$ has $k$ positive ends. Then by Proposition~\ref{topindex}
\begin{eqnarray*}
\mathrm{index}(u) = 2(N-3) +2k +2 +  4k  - 4r \\
=2(N-2)+6k-4r.
\end{eqnarray*}
But by area considerations, we may assume $k \ge r$ and so the index is strictly greater than $2(N-2)$.

Now we suppose $\gamma = \alpha_2^r$ and still $u$ has $k$ positive ends. Then
\begin{eqnarray*}
\mathrm{index}(u) = 2(N-3) +2k +  4k  - 4r \\
=2(N-2)+6k-4r-2.
\end{eqnarray*}
Again as $k \ge r$ the index is at least $2(N-2)+2(r-1)$, but now we have equality only if $k=r$, which implies that $u$ covers a cylinder.
\end{proof}

We can do a similar analysis for curves in $\partial E(1,b_n+\epsilon, S, \dots ,S) \times \R$. The relevant moduli spaces are now denoted $${\cal M}_{\op{ellip}}(J) = {\cal M}(\beta_1^{r_1}, \dots ,\beta_1^{r_{n_1}}, \beta_2^{s_1}, \dots ,\beta_2^{s_{n_2}}; \beta_1^{t_1}, \dots ,\beta_1^{t_{n_3}}, \beta_2^{u_1}, \dots ,\beta_2^{u_{n_4}};J),$$
and the corresponding index formula is as follows:

\begin{proposition}\label{bottomindex} The virtual deformation index of curves $u \in {\cal M}_{\op{ellip}}(J)$ is given by
\begin{eqnarray*}
\mathrm{index}(u) = (N-3)(2-n_1 - n_2 - n_3 - n_4) \\
 + \sum_{i=1}^{n_1}(2r_i + 2\lfloor \frac{r_i}{b_n+\epsilon} \rfloor + N-1) + \sum_{i=1}^{n_2}(2s_i + 2\lfloor s_i(b_n+\epsilon) \rfloor + N-1) \\
- \sum_{i=1}^{n_3}(2t_i + 2\lfloor \frac{t_i}{b_n+\epsilon} \rfloor +N-1) - \sum_{i=1}^{n_4}(2u_i + 2\lfloor u_i(b_n+\epsilon) \rfloor +N-1) \\
= 2(N-3) +2n_1+2n_2 -(2N-4)n_3-(2N-4)n_4 \\ + 2\sum_{i=1}^{n_1}(r_i + \lfloor \frac{r_i}{b_n+\epsilon} \rfloor) + 2\sum_{i=1}^{n_2}(s_i + \lfloor s_i(b_n+\epsilon) \rfloor) \\
- 2\sum_{i=1}^{n_3}(t_i + \lfloor \frac{t_i}{b_n+\epsilon} \rfloor) - 2\sum_{i=1}^{n_4}(u_i + \lfloor u_i(b_n+\epsilon) \rfloor).
\end{eqnarray*}
\end{proposition}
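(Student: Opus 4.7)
My plan is to derive Proposition \ref{bottomindex} as a direct application of the general Fredholm index formula for punctured genus-zero holomorphic curves in a symplectic cobordism, recorded for example in \cite{egh}, combined with the explicit Conley--Zehnder formulas for covers of Reeb orbits on ellipsoid boundaries worked out in \cite{hk}. The argument runs exactly parallel to the proofs of Propositions \ref{middleindex} and \ref{topindex}, and uses no new analytic input.

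I would start with the general SFT index formula
\[
\op{ind}(u) = (N-3)\chi(\dot{\Sigma}) + 2c_1^\tau(u^{*}T) + \sum_{i} \op{CZ}^\tau(\gamma^+_{i}) - \sum_{j} \op{CZ}^\tau(\gamma^-_{j}),
\]
where the domain $\dot{\Sigma}$ is a sphere with $n_1+n_2+n_3+n_4$ punctures, so $\chi(\dot{\Sigma}) = 2-(n_1+n_2+n_3+n_4)$ and the $(N-3)(2-n_1-n_2-n_3-n_4)$ summand appears at once. Because we are working in the symplectization $\partial E(1,b_n+\epsilon,S,\ldots,S) \times \R$, I would choose the trivialization $\tau$ of the contact distribution inherited from the ambient complex structure on $\C^N$, the same convention used in \cite{hk}. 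With this choice the splitting $T \cong \C \oplus \xi$ extends globally over the symplectization, and the relative first Chern class $c_1^\tau(u^{*}T)$ vanishes.

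The remaining task is to substitute the Conley--Zehnder indices at each puncture. With axes $(a_1,a_2,a_3,\ldots,a_N) = (1,b_n+\epsilon,S,\ldots,S)$, the $k$-fold cover of $\beta_j$ is elliptic with monodromy rotation numbers controlled by the ratios $ka_j/a_i$, and the formulas of \cite{hk} (with the above $\tau$) express $\op{CZ}^\tau(\beta_1^r)$ and $\op{CZ}^\tau(\beta_2^s)$ in terms of the corresponding floors. Taking $S$ large enough that the floor terms of the form $\lfloor \cdot /S \rfloor$ coming from the last $N-2$ complex directions vanish for every multiplicity occurring in $\mathcal{M}_{\op{ellip}}(J)$, each positive puncture at $\beta_1^{r_i}$ contributes $2r_i + 2\lfloor r_i/(b_n+\epsilon)\rfloor + (N-1)$, each positive puncture at $\beta_2^{s_i}$ contributes $2s_i + 2\lfloor s_i(b_n+\epsilon)\rfloor + (N-1)$, and the negative ends contribute the same expressions with the opposite sign. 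This is precisely the first displayed form of the proposition.

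To obtain the simplified second form I would just collect terms: combining the $(N-1)$ constants from the $n_1+n_2$ positive ends and the $-(N-1)$ constants from the $n_3+n_4$ negative ends with $(N-3)(2-n_1-n_2-n_3-n_4)$ produces the $2(N-3) + 2(n_1+n_2) - (2N-4)(n_3+n_4)$ piece, and the remaining integer and floor contributions aggregate into the displayed sums $2\sum(r_i + \lfloor r_i/(b_n+\epsilon)\rfloor)$ and $2\sum(s_i + \lfloor s_i(b_n+\epsilon)\rfloor)$ (with the negative-end analogues subtracted). The main obstacle is purely bookkeeping -- keeping the sign conventions from \cite{hk} straight and verifying that $S$ can indeed be chosen uniformly large over all multiplicities occurring in $\mathcal{M}_{\op{ellip}}(J)$ -- but both of these issues were already handled identically in the proofs of Propositions \ref{middleindex} and \ref{topindex}, so the present proposition is simply their cylindrical analogue.
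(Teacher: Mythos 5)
The paper gives no argument for Proposition \ref{bottomindex}; it is stated as a direct application of the SFT index formula from \cite{egh} together with the Conley--Zehnder computations on ellipsoid boundaries from \cite{hk}, and your proposal fills in exactly that computation, so the overall approach is the same as the paper's. The reorganization of constants in your last paragraph is also correct.

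There is, however, an internal inconsistency in your handling of the trivialization. You assert simultaneously that $\tau$ is the constant trivialization of $\xi$ inherited from $\C^N$, that $c_1^\tau(u^*T\overline{M}) = 0$, and that $\op{CZ}^\tau(\beta_1^{r}) = 2r + 2\lfloor r/(b_n+\epsilon)\rfloor + N - 1$. With the literal $\partial_{z_\ell}$ ($\ell\neq 1$) trivialization of $\xi$ over $\beta_1$, the linearized Reeb flow rotates the $\partial_{z_\ell}$-factor through $k a_1/a_\ell$ turns after $k$ periods, so $\op{CZ}^\tau(\beta_1^k) = \sum_{\ell\neq 1}\bigl(2\lfloor k a_1/a_\ell\rfloor + 1\bigr) = 2\lfloor k/(b_n+\epsilon)\rfloor + N - 1$ for $S$ large: the $2k$ is absent. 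In that normalization $c_1^\tau$ is \emph{not} zero but the signed sum of end multiplicities, exactly as the paper records for $N=2$ in its ellipsoid-case discussion, where $c_\tau([C]) = (m_1+m_2)-(n_1+n_2)$. Your Conley--Zehnder formula is the one valid in the other natural trivialization, in which $c_1^\tau$ does vanish and the $2r$ is absorbed into $\op{CZ}$. Either convention yields the proposition since the index is trivialization-independent, but as written you have taken $c_1^\tau$ from one normalization and $\op{CZ}$ from the other; the two choices compensate, so your final answer is correct, but the justification should be made internally consistent by fixing one normalization and carrying it through both terms.
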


Here is an important application of this that we will need:

\begin{lemma}\label{bottomid} Let $u \in {\cal M}_{\op{ellip}}(\beta_1^{r_1}, \dots ,\beta_1^{r_{n_1}}, \beta_2^{s_1}, \dots ,\beta_2^{s_{n_2}}; \beta_1^{g_{n+2}}; J)$; that is $u$ has arbitrary positive ends, but has only a single negative end covering $\beta_1$ $g_{n+2}$ times. Then $\mathrm{index}(u) \ge 0$, with equality if and only if $u$ also has a single positive end covering $\beta_1^{g_{n+2}}$.
\end{lemma}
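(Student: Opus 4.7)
The plan is to specialize Proposition~\ref{bottomindex} to the given asymptotic data, rewrite the resulting expression using the positivity of the symplectic action, and conclude by a case analysis that exploits $\gcd(g_n, g_{n+2}) = 1$.  Set $c := b_n + \epsilon$.  Substituting $n_3 = 1$, $t_1 = g_{n+2}$, $n_4 = 0$ into Proposition~\ref{bottomindex}, and using $\lfloor g_{n+2}/c\rfloor = g_n - 1$ (valid for $\epsilon > 0$ small since $g_{n+2}g_n/(g_{n+2}+\epsilon g_n) \in (g_n - 1, g_n)$), I obtain
\[ \mathrm{index}(u) = 2\Bigl[\sum_i \phi(r_i) + \sum_j \psi(s_j) - g_n - g_{n+2}\Bigr],\]
where $\phi(r) := r + \lfloor r/c\rfloor + 1$ and $\psi(s) := s + \lfloor sc\rfloor + 1$.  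Since $\phi(g_{n+2}) = g_{n+2} + (g_n - 1) + 1 = g_n + g_{n+2}$, the configuration described in the ``if'' direction of the lemma yields $\mathrm{index}(u) = 0$ immediately.

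For the remaining direction, I use $\lfloor x\rfloor = x - \{x\}$ and introduce the action $\mathcal{A}(u) := \sum r_i + c\sum s_j - g_{n+2}$, which is nonnegative by Stokes' theorem since $u$ is $J$-holomorphic in a symplectization.  After collecting terms one obtains the identity
\[ \tfrac{1}{2}\mathrm{index}(u) = \tfrac{1+c}{c}\,\mathcal{A}(u) + \sum_i(1 - \{r_i/c\}) + \sum_j(1 - \{s_j c\}) - \delta, \]
where $\delta := g_n - g_{n+2}/c = \epsilon g_n^2/(g_{n+2}+\epsilon g_n) > 0$ is small.  The key arithmetic input is the identity $\{g_{n+2}/c\} = 1 - \delta$, so that $1 - \{r_i/c\} = \delta$ precisely when $r_i = g_{n+2}$.

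It remains to verify that every configuration other than $(n_1, n_2) = (1, 0)$ with $r_1 = g_{n+2}$ forces the right-hand side to be strictly positive; this case analysis is the main bookkeeping obstacle.  The cases are: (a) if some $r_i = kg_{n+2}$ with $k \ge 2$, then $r_i/c = kg_n - k\delta$ gives $1 - \{r_i/c\} = k\delta > \delta$; (b) if some $r_i$ is not a multiple of $g_{n+2}$, then $\gcd(g_n, g_{n+2}) = 1$ forces $\{r_ig_n/g_{n+2}\} \ge 1/g_{n+2}$, so $1 - \{r_i/c\} \ge 1/g_{n+2} - O(\epsilon) > \delta$ for $\epsilon$ small; (c) if $n_1 \ge 1$, all $r_i = g_{n+2}$, but $(n_1, n_2) \ne (1, 0)$, then $\mathcal{A}(u) \ge 1$ and $\tfrac{1+c}{c}\mathcal{A}(u) > 1 > \delta$; (d) if $n_1 = 0$, the action bound forces $\sum s_j \ge g_n$, hence $\mathcal{A}(u) \ge \epsilon g_n$, and a short calculation using $\epsilon g_n/c = \delta$ gives $\tfrac{1+c}{c}\mathcal{A}(u) \ge \epsilon g_n + \delta > \delta$.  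These exhaust the possibilities and complete the proof.
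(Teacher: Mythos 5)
Your proof is correct, and it follows the same basic strategy as the paper: specialize Proposition~\ref{bottomindex}, use $\lfloor g_{n+2}/(b_n+\epsilon)\rfloor = g_n - 1$, invoke nonnegativity of the action $\mathcal{A}(u)$, and finish with $\gcd(g_n,g_{n+2})=1$. The main difference is in the bookkeeping: the paper passes to a lower bound by approximately replacing $\lfloor \cdot \rfloor$ terms with their $\epsilon=0$ values and appeals to integrality to pin down the equality case, whereas you derive an exact identity
$\tfrac12\mathrm{index}(u) = \tfrac{1+c}{c}\mathcal{A}(u) + \sum_i(1-\{r_i/c\}) + \sum_j(1-\{s_jc\}) - \delta$
and observe that every summand other than $-\delta$ is manifestly nonnegative, reducing the equality analysis to checking when each nonnegative contribution can be as small as $\delta$. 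This is a slightly cleaner way to organize the argument, and it dodges a minor imprecision in the paper's stated inequality (the bound $\lfloor s_i(b_n+\epsilon)\rfloor \ge s_ib_n$ fails when $s_ib_n\notin\Z$, though the paper's conclusion is unaffected since equality still forces $s_ib_n\in\Z$). One small caveat in your case (a): the claim $1-\{r_i/c\}=k\delta$ presumes $k\delta<1$; for larger $k$ the fractional part wraps around, but then $\mathcal{A}(u)\ge (k-1)g_{n+2}$ already dominates $\delta$, so the conclusion stands. In fact your own action bound $\mathcal{A}(u)<\delta c/(1+c)<1$ in the near-equality regime already shows $r_i\le g_{n+2}$, making case (a) with $k\ge 2$ vacuous — but it is harmless to include it.
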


\begin{proof}

By Proposition~\ref{bottomindex}, we have
\begin{eqnarray*}
\mathrm{index}(u) = 2(N-3) +2n_1+2n_2 -(2N-4) \\
 + 2\sum_{i=1}^{n_1}(r_i + \lfloor \frac{r_i}{b_n+\epsilon} \rfloor) + 2\sum_{i=1}^{n_2}(s_i + \lfloor s_i(b_n+\epsilon) \rfloor) \\
- 2(g_{n+2} + \lfloor \frac{g_{n+2}}{b_n+\epsilon} \rfloor) \\
= 2n_1+2n_2 + 2\sum_{i=1}^{n_1}(r_i + \lfloor \frac{r_i}{b_n+\epsilon} \rfloor) + 2\sum_{i=1}^{n_2}(s_i + \lfloor s_i(b_n+\epsilon) \rfloor)
- 2(g_{n+2} + g_n) \\
\ge 2(\sum_{i=1}^{n_1}(r_i + \frac{r_i}{b_n}) + \sum_{i=1}^{n_2}(s_i + s_i b_n+1)
- g_{n+2} - g_n)
\end{eqnarray*}
with equality here if and only if $\lfloor \frac{r_i}{b_n+\epsilon} \rfloor = \frac{r_i}{b_n} -1$ and $\lfloor s_i(b_n+\epsilon) \rfloor = s_i b_n$ for all $i$. These conditions hold if and only if $\frac{r_i}{b_n}$ and $s_i b_n$ are always integers.

Now, the area inequality for holomorphic curves implies that $\sum_{i=1}^{n_1}r_i + \sum_{i=1}^{n_2}s_i(b_n+\epsilon) \ge g_{n+2}$ and hence for a small choice of $\epsilon$ we have $\sum_{i=1}^{n_1}r_i + \sum_{i=1}^{n_2}s_i b_n \ge g_{n+2}$ and $\sum_{i=1}^{n_1}\frac{r_i}{b_n} + \sum_{i=1}^{n_2}s_i \ge \frac{g_{n+2}}{b_n} = g_n$. It follows that $\mathrm{index}(u) \ge 0$ with equality only if $n_2=0$ and $\sum r_i =g_{n+2}$.

We claim that in the case of equality each $r_i \ge g_{n+2}$. As $\sum r_i =g_{n+2}$ this immediately implies that there is a single positive end and completes the proof of the lemma.

To justify the claim, to have equality we have seen that each $r_i$ must be a multiple of $b_n = \frac{g_{n+2}}{g_n}$, so if the claim were false and some $r_i < g_{n+2}$ then $g_{n+2}$ and $g_n$ have a common factor. Using the identity $3g_{n+1}=g_{n+2} + g_n$ we see that either this common factor is $3$, or all $g_n$ share a prime factor, which is certainly not the case.  However, in fact none of the $g_n$ are divisible by $3$.  This is implied, for example, by the Fibonacci identity $g_n^2+g_{n+1}^2-3g_ng_{n+1}=-1$ (which is shown in \cite{ms}), since $-1$ is not a square mod $3$.

\end{proof}

Now we choose a generic family $\{J_t\}$ of admissible almost-complex structures on $X$, all equal outside of a compact set, and study the universal moduli space ${\cal M} = \{([u],t)| [u] \in {\cal M}^s(J_t), t \in [0,1]\}$ where ${\cal M}^s(J) \subset {\cal M}(\alpha_2, \dots ,\alpha_2, \beta_1^{g_{n+2}}; J),$ consists of somewhere injective curves, with the notation  as in Lemma \ref{mainspace}. The main result of this section is the following.

\begin{theorem}\label{compact} ${\cal M}$ is compact.
\end{theorem}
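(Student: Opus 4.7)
The plan is to apply the SFT compactness theorem \cite{behwz} to an arbitrary sequence $([u_k],t_k)\in\mathcal{M}$ and then use the index and action estimates established earlier in this section to rule out every nontrivial breaking of the limit. After passing to a subsequence, $t_k\to t_\infty\in[0,1]$ and the $u_k$ converge to a holomorphic building $u_\infty$ with one cobordism level in $\overline{M}$, together with possibly several symplectization levels in $\R\times\partial E(c,c+\epsilon)\times\mathbb{C}P^1(2T)^{N-2}$ on top and $\R\times\partial E(1,b_n+\epsilon,S,\dots,S)$ on the bottom. The total action of the building equals the action of the $u_k$, namely
\[\mathcal{A}(u_\infty)=g_{n+1}(c+\epsilon)-g_{n+2},\]
which is a fixed small positive number. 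In particular every component has uniformly bounded action, so for $S$ sufficiently large the orbits $\beta_j$ with $j\ge 2$ cannot appear as ends. Lemma~\ref{lem:jexist} ensures the cobordism level stays in $U(T)^c$, where all the index formulas of this section apply.

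The heart of the argument is to show that $u_\infty$ has no nontrivial symplectization levels and that its cobordism level is a single somewhere injective curve. For the bottom level, the negative ends of its components cover $\beta_1$ with total multiplicity $g_{n+2}$. I would first argue that this level consists of a single connected component carrying the full negative end $\beta_1^{g_{n+2}}$: any splitting into multiple components would, by applying Proposition~\ref{bottomindex} to each one and invoking the coprimality of $g_{n+1}$ and $g_{n+2}$ together with the divisibility obstructions exploited in Lemma~\ref{bottomid}, produce a sum of indices strictly greater than zero, contradicting the fact that the total index of the building equals that of the $u_k$, which is zero by Lemma~\ref{mainspace}. Lemma~\ref{bottomid} then forces this component to be the trivial cylinder over $\beta_1^{g_{n+2}}$, which by SFT stability is absorbed into the cobordism level. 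An entirely analogous argument, using Lemma~\ref{topid} and Proposition~\ref{topindex} and starting from the fact that the top-most level has positive ends simply covering $\alpha_2$, shows that every top symplectization level consists only of trivial cylinders over $\alpha_2$.

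With no nontrivial breaking, $u_\infty$ is a single curve in $\mathcal{M}(\alpha_2,\dots,\alpha_2,\beta_1^{g_{n+2}};J_{t_\infty})$. To complete the proof that $[u_\infty]\in\mathcal{M}^s(J_{t_\infty})$ I need to rule out the possibility that $u_\infty$ is a nontrivial multiple cover. If $u_\infty$ were a $d$-fold cover of a somewhere injective curve $v$, then $v$ would carry asymptotics that divide the original ones, in particular a single negative end covering $\beta_1$ exactly $g_{n+2}/d$ times and positive ends covering $\alpha_2$ a total of $g_{n+1}/d$ times; since $g_{n+1}$ and $g_{n+2}$ are coprime (by the induction already cited in the proof of Proposition~\ref{prop:mainprop}), this forces $d=1$.

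I expect the main obstacle to be the detailed index bookkeeping when the bottom symplectization level splits across several components, since Lemma~\ref{bottomid} is stated only for a single component carrying the full $\beta_1^{g_{n+2}}$. Extending its argument---via the identity $3g_{n+1}=g_{n+2}+g_n$ and the observation that no $g_n$ is divisible by $3$---to show that any multi-component splitting contributes total index at least one is the technical core of the proof; once this is in hand, the remaining steps follow from general properties of the ECH-style index and the stability conventions in the SFT compactification.
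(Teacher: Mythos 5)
Your overall skeleton matches the paper: invoke SFT compactness, use the index formulas of Propositions~\ref{middleindex}--\ref{bottomindex} together with Lemmas~\ref{topid} and~\ref{bottomid} to rule out nontrivial breaking, and then exclude multiple covers. The final coprimality argument is in fact a little cleaner than the paper's (which uses $3g_{n+1}=g_{n+2}+g_n$ to deduce that $k$ divides all the $g_j$; you observe directly that $k\mid g_{n+1}$ and $k\mid g_{n+2}$ forces $k=1$).

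However, there is a genuine gap in the step you flag as the ``technical core,'' and you have misidentified where the difficulty actually lies. A naive level-by-level index count does \emph{not} close: by Lemma~\ref{topid} every nontrivial component in $\partial E(c,c+\epsilon)\times(\mathbb{C}P^1)^{N-2}\times\R$ has index at least $2(N-2)>0$, so if indices of levels simply added up one would conclude immediately that no such component exists. The reason this does not work is the Morse--Bott degeneracy of the orbits $\alpha_{i,w}$: when components match along an end in this family, the gluing costs $2(N-2)$ per matched end, so the total index of the building equals the sum of component indices \emph{minus} $2(N-2)m$, where $m$ is the number of matched ends on $\partial E(c,c+\epsilon)\times(\mathbb{C}P^1)^{N-2}$. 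Your proposal never accounts for this correction, and without it the index arithmetic does not force the breaking to be trivial. Moreover, you treat the cobordism level as automatically a single curve, but a priori there can be several components in $\overline{M}$, possibly connected to one another through lower-level pieces. The paper's solution is the ``gathering'' step: regroup curves sharing matching ends within a level (and push up components with no negative ends) so that each resulting component in $\overline{M}$ has a single negative end, and then prove the sharp estimate of Lemma~\ref{keyest}, namely $\mathrm{index}(C)\ge 2\sum_p(n_2^p-k^p\tilde n_2^p)$ with equality iff $C$ absorbed no bottom-level curves. It is this inequality, balanced against the lower bound~\eqref{eqn:boundedbelow} and the $2(c-1)$ terms from Lemma~\ref{topid}, that makes the index bookkeeping close. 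That machinery is not a refinement of your outline; it is the missing idea.

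A smaller point: the action of curves in $\mathcal M^s(J_t)$ is $g_{n+1}(c+\epsilon)-\lambda(t)g_{n+2}$ and so varies with $t$; it is bounded and positive, which is all that is needed, but the formula you state holds only at $t=0$.
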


\begin{proof}

{\em Step 1.} {\em Gathering together curves into components of the holomorphic building.} By the compactness theorem in \cite{behwz}, the limit of curves in ${\cal M}$ is a holomorphic building consisting of curves in $\overline{M}$ and perhaps multiple levels of curves in $\partial E(c, c+\epsilon) \times (\mathbb{C}P^1)^{N-2} \times \R$ and $\partial E(1,b_n+\epsilon, S, \dots ,S) \times \R$ with matching asymptotic limits.

For the purposes of our index calculations, it will be convenient to think of certain subsets of curves with matching ends as glued together to form a single component. This is done as follows:

\begin{enumerate}
\item Any two curves which both lie in levels of $\partial E(c, c+\epsilon) \times (\mathbb{C}P^1)^{N-2} \times \R$ or both lie in levels of $\partial E(1,b_n+\epsilon, S, \dots ,S) \times \R$ and have a matching end are glued together to lie in the same component.

\item Any component without negative ends will be glued with the higher level curves which match its positive ends, and the resulting component will be thought of as a component in the higher level.

\end{enumerate}

To help avoid confusion we will always denote these components with upper case letters and individual curves by lower case letters. Note that Lemmas \ref{topid} and \ref{bottomid} apply also to components mapping to $\partial E(c, c+\epsilon) \times (\mathbb{C}P^1)^{N-2} \times \R$ and $\partial E(1,b_n+\epsilon, S, \dots ,S) \times \R$ defined as above.

After these identifications we will end up with components mapping to $\overline{M}$, each with a single negative end, a single component (perhaps trivial) mapping to $\partial E(1,b_n+\epsilon, S, \dots,S) \times \R$ with a single negative end asymptotic to $\beta_1^{g_{n+2}}$, and perhaps a union of components mapping to $\partial E(c, c+\epsilon) \times (\mathbb{C}P^1)^{N-2} \times \R$. Each of the components in $\partial E(c, c+\epsilon) \times (\mathbb{C}P^1)^{N-2} \times \R$ has positive ends asymptotic to $\alpha_2$ and a single negative end. The control on the negative ends follows because we are taking limits of curves of genus $0$.

Note, however, that it is certainly possible that {\em curves} in $\overline{M}$ (and $\partial E(c, c+\epsilon) \times (\mathbb{C}P^1)^{N-2} \times \R$) have multiple negative ends.

{\em Step 2.} {\em Index estimates.}  We will obtain a useful estimate for the index of curves in $\overline{M}$, and as a result for the index of components in $\overline{M}$.

Suppose that a limiting curve $u$ in $\overline{M}$ lies in a moduli space  $${\cal M}(\alpha_1^{r_1}, \dots ,\alpha_1^{r_{n_1}}, \alpha_2^{s_1}, \dots ,\alpha_2^{s_{n_2}}; \beta_1^{t_1}, \dots ,\beta_1^{t_{n_3}}, \beta_2^{u_1}, \dots ,\beta_2^{u_{n_4}};J).$$

For generic $1$-parameter families of almost-complex structures we may assume that somewhere injective curves in $\overline{M}$ have $\mathrm{index}(u) \ge -1$. Then since all indices are automatically even we have that in fact the index is nonnegative.

In general, suppose that a curve $u$ is a $k$-times cover of a somewhere injective curve $\tilde{u}$. Suppose this curve lies in $${\cal M}(\alpha_1^{\tilde{r}_1}, \dots ,\alpha_1^{\tilde{r}_{\tilde{n}_1}}, \alpha_2^{\tilde{s}_1}, \dots ,\alpha_2^{\tilde{s}_{\tilde{n}_2}}; \beta_1^{\tilde{t}_1}, \dots ,\beta_1^{\tilde{t}_{\tilde{n}_3}}, \beta_2^{\tilde{u}_1}, \dots ,\beta_2^{\tilde{u}_{\tilde{n}_4}};J).$$ This means that the positive ends of $u$ asymptotic to multiples of $\alpha_1$ can be partitioned into $\tilde{n}_1$ blocks according to which end of $\tilde{u}$ they cover. Thus the sum of the $r_i$ in the first block add to $k\tilde{r}_1$ and so on, and similarly for the other limiting orbits.

Proposition \ref{middleindex} gives us the index of $\tilde{u}$ as follows:
\begin{eqnarray}
\label{eqn:underlyingcalculation}
\mathrm{index}(\tilde{u}) = (N-3)(2-\tilde{n}_1 - \tilde{n}_2 - \tilde{n}_3 - \tilde{n}_4) \\
 + \sum_{i=1}^{\tilde{n}_1}(2\tilde{r}_i + 2\lfloor \frac{\tilde{r}_ic}{c+\epsilon} \rfloor + N-1) + \sum_{i=1}^{\tilde{n}_2}(2\tilde{s}_i + 2\lfloor \frac{\tilde{s}_i(c+\epsilon)}{c} \rfloor + N-1) \nonumber \\
- \sum_{i=1}^{\tilde{n}_3}(2\tilde{t}_i + 2\lfloor \frac{\tilde{t}_i}{b_n+\epsilon} \rfloor + N-1) - \sum_{i=1}^{\tilde{n}_4}(2\tilde{u}_i + 2\lfloor \tilde{u}_i(b_n+\epsilon) \rfloor + N-1) \nonumber \\
=2(N-3)+2\tilde{n}_2 - (2N-4)\tilde{n}_3 - (2N-4)\tilde{n}_4 + 4\sum_{i=1}^{\tilde{n}_1}\tilde{r}_i + 4\sum_{i=1}^{\tilde{n}_2}\tilde{s}_i \nonumber \\
- 2\sum_{i=1}^{\tilde{n}_3}(\tilde{t}_i + \lfloor \frac{\tilde{t}_i}{b_n+\epsilon} \rfloor) - 2\sum_{i=1}^{\tilde{n}_4}(\tilde{u}_i + \lfloor \tilde{u}_i(b_n+\epsilon) \rfloor) \nonumber
\end{eqnarray}

By combining Proposition \ref{middleindex} and \eqref{eqn:underlyingcalculation}, and using $\sum r_i = k \sum \tilde{r}_i$, together with similar formulas, we get:
\begin{eqnarray}
\label{eqn:indexestimate}
\mathrm{index}(u) = k \mathrm{index}(\tilde{u}) \hspace{35 mm}\\
+2(1-k)(N-3)+2(n_2-k\tilde{n}_2) + 2(N-2)(k\tilde{n}_3-n_3) +2(N-2)(k\tilde{n}_4-n_4) \nonumber \\
- 2 \sum_{i=1}^{n_3} \lfloor \frac{t_i}{b_n+\epsilon} \rfloor + 2k\sum_{i=1}^{\tilde{n}_3}\lfloor \frac{\tilde{t}_i}{b_n+\epsilon} \rfloor  -2\sum_{i=1}^{n_4}\lfloor u_i(b_n+\epsilon) \rfloor + 2k\sum_{i=1}^{\tilde{n}_4}\lfloor \tilde{u}_i(b_n+\epsilon) \rfloor. \nonumber
\end{eqnarray}

With this formula in hand for curves in $\overline{M}$ we proceed to consider components in $\overline{M}$.

Let us assume that a component $C$ in $\overline{M}$ consists of curves $u^p$ in $\overline{M}$ for $1 \le p \le P$ and components $W^q$ in $\partial E(1,b_n+\epsilon, S, \dots,S) \times \R$ for $1 \le q \le Q$. We assume that the negative end of the component is the negative end of $u^1$ asymptotic to $\beta_1^{t_1}$. The assumption here is that this negative end is asymptotic to a cover of $\beta_1$; the case when it is asymptotic to a cover of $\beta_2$ follows by the same argument.

We denote the numbers of ends and covering numbers of curves $u^p$ using the same notation as above but with a superscript $p$. We define $N_3 = \sum_p n^p_3$ and $N_4 = \sum_p n^p_4$. The total number of matching ends with components $W^q$ is then $N_3 + N_4 -1$ (because one end is unmatched), and each negative end of a $u^p$ (except the first end of $u^1$) matches with a positive end of one of the $W^q$. Finally, as the component has genus $0$ we must have $P+Q = N_3 + N_4$.

As above, $u^p$ will be a $k^p$ times cover of a somewhere injective curve $\tilde{u}^p$, and we use the natural notation to describe the $\tilde{u}^p$.

Our key index estimate can now be stated as follows.

\begin{lemma} \label{keyest} $\mathrm{index}(C) \ge 2\sum_p(n^p_2-k^p\tilde{n}^p_2)$ with equality if and only if the component $C$ contains no curves $W^q$.
\end{lemma}

\begin{proof}
We sum over all curves to get the total index of our component. For curves in $\overline{M}$ we use formula \eqref{eqn:indexestimate} and the fact that somewhere injective curves have nonnegative index. The index of components $W^q$ in $\partial E(1,b_n+\epsilon, S, \dots,S) \times \R$ with no negative ends is given by Proposition \ref{bottomindex} with $n_3=n_4=0$. After summing we end up with
\begin{eqnarray}
\label{eqn:componentcalculation}
\mathrm{index}(C) \ge 2(P- \sum_p k^p)(N-3)+2\sum_p(n^p_2-k^p\tilde{n}^p_2)  \\
 + 2(N-2)(\sum_p k^p\tilde{n}^p_3-N_3) +2(N-2)(\sum_p k^p \tilde{n}^p_4-N_4)  \nonumber \\
- 2 \lfloor \frac{t^1_1}{b_n+\epsilon} \rfloor + 2\sum_{i,p} k^p\lfloor \frac{\tilde{t}^p_i}{b_n+\epsilon} \rfloor + 2\sum_{i,p} k^p\lfloor \tilde{u}^p_i(b_n+\epsilon) \rfloor \nonumber \\
+ 2Q(N-3) +2(N_3-1) + 2N_4 + 2\sum_{(i,p) \neq (1,1)} t^p_i + 2\sum_{i,p} u^p_i .  \nonumber
\end{eqnarray}

The last line in \eqref{eqn:componentcalculation} corresponds to terms in the index formulas for lower level curves which do not immediately cancel with terms in \eqref{eqn:indexestimate}.

We get a rougher estimate by ignoring all nonnegative terms corresponding to the matching ends. Suppose that our unmatched end covers the end corresponding to $\tilde{t}^1_1$ on $\tilde{u}^1$. Gathering multiples of $(N-3)$ this results in
\begin{eqnarray}
\label{eqn:componentcalculation2}
\frac{1}{2}\mathrm{index}(C) \ge (N-3)(P+Q- \sum_p k^p + \sum_p k^p\tilde{n}^p_3 -N_3 +\sum_p k^p \tilde{n}^p_4 - N_4 ) \\ +\sum_p(n^p_2-k^p\tilde{n}^p_2)
 + (\sum_p k^p\tilde{n}^p_3-N_3) + (\sum_p k^p \tilde{n}^p_4-N_4)
-  \lfloor \frac{t^1_1}{b_n+\epsilon} \rfloor + k^1 \lfloor \frac{\tilde{t}^1_1}{b_n+\epsilon} \rfloor.  \nonumber
\end{eqnarray}

Note that we have equality in the above formula only if there are no matching ends in our component.
Using the identity $P+Q = N_3 + N_4$ and removing more nonnegative terms (in particular the $(N-3)$ factor) we get
\begin{eqnarray}
\label{eqn:componentcalculation3}
\frac{1}{2}\mathrm{index}(C)\ge (N-3)(-\sum_p k^p + \sum_p k^p\tilde{n}^p_3 +\sum_p k^p \tilde{n}^p_4)
 +\sum_p(n^p_2-k^p\tilde{n}^p_2) \\
  + \sum_p (k^p\tilde{n}^p_3-n^p_3)
-  \lfloor \frac{t^1_1}{b_n+\epsilon} \rfloor + k^1 \lfloor \frac{\tilde{t}^1_1}{b_n+\epsilon} \rfloor  \nonumber \\
\ge \sum_p(n^p_2-k^p\tilde{n}^p_2) + k^1\tilde{n}^1_3-n^1_3
-  \lfloor \frac{t^1_1}{b_n+\epsilon} \rfloor + k^1 \lfloor \frac{\tilde{t}^1_1}{b_n+\epsilon} \rfloor.  \nonumber
\end{eqnarray}

Suppose the unmatched end of $C$ locally covers $\tilde{u}^1$ with degree $l \le k^1$, that is, $t^1_1=l\tilde{t}^1_1$. Then
using the inequality $l \lfloor \frac{x}{l}  \rfloor - \lfloor x \rfloor \ge -l+1$ (which follows, for example, from ``Hermite's identity") we have that $k^1 \lfloor \frac{\tilde{t}^1_1}{b_n+\epsilon} \rfloor - \lfloor \frac{t^1_1}{b_n+\epsilon} \rfloor \ge -l+1$. On the other hand $k^1\tilde{n}^1_3-n^1_3$ is at least $k^1$ minus the number of ends covering the first end of $\tilde{u}^1$, which is at most $1+(k^1-l)$. We conclude that
\begin{equation*}
\label{eqn:conclude}
\mathrm{index}(C) \ge 2\sum_p(n^p_2-k^p\tilde{n}^p_2)
\end{equation*}
with equality only if there are no matching ends and the proof is complete.
\end{proof}

{\em Step 3.} {\em Completion of the proof.}

Note that although the term $\sum_p(n^p_2-k^p\tilde{n}^p_2)$ in Lemma \ref{keyest} could be negative, it is bounded from below by
\begin{equation}
\label{eqn:boundedbelow}
-\sum_{i,p}(s^p_i-1),
\end{equation}
where we recall that the $s^p_i$ are the covering numbers of the limits of our component on $\alpha_2$. There is equality here if and only if all $\tilde{s}^p_i=1$.

We can now complete the proof of Theorem~\ref{compact}.

The index formulae and matching conditions in the compactness theorem imply that the sum of the indices of the limiting components, minus $2(N-2)m$, where $m$ is the number of ends matched on $\partial E(c, c+\epsilon) \times (\mathbb{C}P^1)^{N-2}$, is $0$, the index of curves in $\mathcal M$.  Note that the $2(N-2)m$ term here comes from the fact that the index formula from Lemma~\ref{topid} is for curves whose ends are allowed to vary in the corresponding Morse-Bott family.

Given our various bounds, this is only possible if all inequalities coming from Lemmas \ref{topid} and \ref{bottomid} and \ref{keyest} are equalities.  Specifically, the negative term for any component in $\overline{M}$ from \eqref{eqn:boundedbelow} must be precisely compensated for by the terms $2(c-1)$ in the index bound for curves in $\partial E(c, c+\epsilon) \times (\mathbb{C}P^1)^{N-2} \times \R$ from Lemma~\ref{topid}.  By Lemma \ref{topid}, equality in the index then gives that the components in $\partial E(c, c+\epsilon) \times (\mathbb{C}P^1)^{N-2}  \times \R$ are covers of trivial cylinders. Trivial cylinders have action $0$ and so we can also see that there was no gluing of lower level curves (which necessarily have positive area) to construct these components.

Also to have equality, Lemma \ref{keyest} showed that there was no matching to construct components in $\overline{M}$, and we see by Lemma~\ref{bottomid} that any components in $\partial E(1,b_n+\epsilon, T, \dots ,T) \times \R$ must be trivial cylinders.
In conclusion the only limiting curve of nonzero action is a single curve in $\overline{M}$.

To finish the proof we need to show that our limiting curve in $\overline{M}$ is somewhere injective. This will imply that it has nonnegative index and hence by Lemma \ref{topid} that the curves in $\partial E(c, c+\epsilon) \times (\mathbb{C}P^1)^{N-2}$ are trivial cylinders. Therefore we have compactness as required.

Assume to the contrary that this curve is a multiple cover of degree $k$ of some underlying curve $\tilde{u}$. For index equality, we have seen that the positive ends of $\tilde{u}$ must be simply covered, and so there are exactly $g_{n+1}/k$ such ends. There is a single negative end asymptotic to $\beta_1^{g_{n+2}/k}$. By our identity $3g_{n+1}=g_{n+2} + g_n,$ we then see then that $k$ actually divides all $g_n$. This is a contradiction.
\end{proof}

Note that the same argument gives that the moduli space $\mathcal{M}_{J}(g_{n+1}\alpha_2,\beta_1^{g_{n+2}})$ from Proposition~\ref{nontriv} is compact, as promised.

\end{section}

\begin{section}{Proof of Theorem \ref{main1}} \label{secn5}

\begin{proof}

Suppose there exists a symplectic embedding
\begin{equation}
\label{eqn:potentialembedding}
\lambda E(1,b_n+\epsilon,S, \dots ,S) \hookrightarrow B^4(c)\times \C^{N-2} \subset E(c,c+\epsilon) \times \C^{N-2}.
\end{equation}
Here $c=\frac{g_{n+2}}{g_{n+1}}$, $\lambda>0$ and $S$ is chosen sufficiently large that the moduli spaces described in section \ref{secn4} all have dimension $0$ or $1$ as claimed.

Arguing as in \cite{hk}, Lemma $3.1$, there exists a smooth family of symplectic embeddings $$\phi_t: \lambda(t)E(1,b_n+\epsilon,S, \dots ,S) \hookrightarrow E(c,c+\epsilon) \times \C^{N-2}$$ where $\lambda(0)=1$ and $\phi_0$ is a product embedding as discussed in section \ref{secn3}, and $\lambda(1)=\lambda$ and $\phi_1 = \phi$ is the embedding \eqref{eqn:potentialembedding}.  By slightly enlarging $c$ to $g_{n+2}/g_{n+1} + \epsilon'$ if necessary, we can assume that $\phi_t$ has image in $\op{int}(E(c,c+\epsilon)) \times \mathbb{C}^{N-2}$, and by choosing $T$ sufficiently large, in some $\op{int}(E(c,c+\epsilon)) \times \mathbb{C}P^1(2T)^{N-2}$.

Associated to these embeddings is a smooth family of almost complex structures $J_t$ on the corresponding completions of $E(c,c+\epsilon) \times \mathbb{C}P^1(2T)^{N-2} \setminus \phi_t(\lambda(t)E(1,b_n+\epsilon,S, \dots ,S))$.  We can view this family as a family of almost complex structures on $\overline{M}$, and we can assume without loss of generality that all these almost complex structures are equal outside of a compact set.  We can then consider the universal moduli space ${\cal M} = \{([u],t)| [u] \in {\cal M}^s(J_t), t \in [0,1]\}$ as in Theorem~\ref{compact}. By Proposition \ref{jexist} we may choose $J_0$ as in Proposition \ref{nontriv} so that ${\cal M}^s(J_0)$ represents a nontrivial cobordism class. Thus, by Theorem~\ref{compact}, ${\cal M}^s(J_1)$ is also nontrivial.

The action of curves in ${\cal M}(J_1)$ is $$g_{n+1}(c+\epsilon) - \lambda g_{n+2} = g_{n+2}(1-\lambda)+(\epsilon+\epsilon')g_{n+1}.$$ Therefore since holomorphic curves have positive action and $\epsilon$ and $\epsilon'$ can be chosen arbitrarily small we see that $\lambda \le 1$. This then implies Theorem \ref{main1} by Lemma~\ref{lem:mcslemma}.

\end{proof}

\end{section}

\end{document}